\documentclass[final]{siamltex}

% definitions used by included articles, reproduced here for
% educational benefit, and to minimize alterations needed to be made
% in developing this sample file.
\usepackage{graphics}
\usepackage{amssymb, latexsym, amsmath, epsfig}
\usepackage{graphicx}
\usepackage{multirow}
\usepackage{threeparttable}

\def\e{{\epsilon}}

% some definitions of bold math italics to make typing easier.
% They are used in the corollary.

%\newtheorem{theorem}{Theorem}[section]
%\newtheorem{lemma}[theorem]{Lemma}
\newtheorem{coro}[theorem]{Corollary}

%\setlength{\textwidth}{17cm} %\setlength{\itemindent}{0.31in}
%\setlength{\textheight}{23cm}
%\setcounter{page}{1} %\zihao{4}
%\hoffset=-0.6in \voffset=-0.95in \catcode`\@=11
%% \catcode`\@=12

%\newtheorem{definition}[theorem]{Definition}

\newtheorem{remark}[theorem]{Remark}

\def\cal{\mathcal}

\def\e{\varepsilon}

\def\O{\Omega}

\newcommand\ddelta\bigtriangledown
\newcommand\ld\lambda
\newcommand\Ld\Lambda

\def \bN {\Bbb N}
\def \bP {\Bbb P}

\def \bZ {\Bbb Z}

\begin{document}
%\vskip 0.8 cm
\title{Any order superconvergence finite volume schemes \\
for 1D general elliptic equations}
%\date{}
\author{Waixiang Cao
    \thanks{College of Mathematics and Scientific Computing, Sun Yat-sen University,
            Guangzhou, 510275, P. R. China.}
%\and Zhiwen Li
%    \thanks{College of Mathematics and Scientific Computing, Sun Yat-sen University,
%            Guangzhou, 510275, P. R. China.}
\and Zhimin Zhang
\thanks{Department of Mathematics, Wayne State University, Detroit, MI 48202, USA.
     This author is partially supported by the US National Science Foundation through grant
     DMS-111530, the Ministry of Education of China through the Changjiang Scholars program,
    and Guangdong Provincial Government of China through the
   ``Computational Science Innovative Research Team" program.}
\and Qingsong Zou
\thanks{College of Mathematics and Scientific Computing and Guangdong Province Key Laboratory of Computational Science, Sun Yat-sen
          University, Guangzhou, 510275, P. R. China. This author is supported in part by
          the National Natural Science Foundation of China under the grant 11171359 and  in part by the Fundamental Research Funds for the Central
          Universities of China.}
}
\maketitle
\begin{abstract}
We present and analyze a finite volume scheme of arbitrary order
for elliptic equations in the one-dimensional setting.
In this scheme, the control volumes are constructed by using
the Gauss points in subintervals of the underlying mesh.
We provide a unified proof for the inf-sup condition,
and show that our finite volume scheme has optimal convergence rate
under the energy and $L^2$ norms of the approximate error.
Furthermore, we prove that the derivative error is superconvergent
at all Gauss points and in some special case, the convergence rate can reach $h^{2r}$, where $r$ is the polynomial degree
of the trial space. All theoretical results are justified by numerical tests.
\end{abstract}

%%%%%%%%%%%%%%%%%%%%%%%%%%%%%%%%%%%%%%%%%%%%%%%%%%%%%%%%%%%%%%%%%%%%%%
\section{Introduction}
\setcounter{equation}{0}
The {\it finite volume method} (FVM) attracted a lot of attentions during the past
several decades,  we refer to \cite{Bank.R;Rose.D1987,Barth.T;Ohlberger2004, Cai.Z1991, Cai.Z_Park.M2003, ChenWuXu2011, Ewing.R;Lin.T;Lin.Y2002, EymardGallouetHerbin2000, Li.R2000, Ollivier-Gooch;M.Altena2002,Plexousakis_2004,Suli1991, Xu.J.Zou.Q2009, zou2009a} and the
references cited therein for an incomplete list of references.  Due to the local conservation of numerical fluxes, the capability to deal with the problems on the domains with complex geometries,
 and other advantages, FVM has a wide range of applications
in scientific and engineering computations  (see, e.g., \cite{EymardGallouetHerbin2000,Ollivier-Gooch;M.Altena2002}).

   There have been many studies of the mathematical theory for FVM, see, e.g.,
   \cite{Bank.R;Rose.D1987, Xu.J.Zou.Q2009} and the monographs \cite{Barth.T;Ohlberger2004,EymardGallouetHerbin2000,Li.R2000}.
   However, much attention has been paid to linear FVM schemes(see e.g., \cite{Bank.R;Rose.D1987, Cai.Z1991, Ewing.R;Lin.T;Lin.Y2002,Suli1991,Suli1992}),  high order FVM schemes have not been investigated
   as much or as satisfactorily as  linear FVM schemes. Moreover, the analysis of high order FVM schemes are often done case by case. For instances, earlier works on quadratic FVM schemes  can be traced back to \cite{Tian.Chen_1991,Emonot1992,Liebau1996},  high order FVMs  for  1D elliptic equations were derived in \cite{Plexousakis_2004},
   and high order FVMs on rectangular meshes were derived
   and analyzed in \cite{Cai.Z_Park.M2003},  the quadratic FVM schemes on triangular meshes have also been intensively studied by \cite{Li.R2000,Xu.J.Zou.Q2009,Chen.L}. To the best of our knowledge, no analysis of FVM scheme
   of an arbitrary order has been published yet.

   In this paper, we study a family of any order  FVM schemes in the one-dimensional setting.
   Instead of a case-by-case study as in the literature for quadratic and cubic FVM schemes,
   we adopt a unified approach to establish the inf-sup condition.
   Earlier works(see, e.g. \cite{Liebau1996, Li.R2000,Xu.J.Zou.Q2009,ChenWuXu2011}) utilized element-wise analysis to prove that the bilinear form resulting from FVM is positive definite,
   which is a stronger property than the inf-sup condition. Hence, some assumption is needed
   for the mesh, such as quasi-uniformity and shape-regularity (in 2D). The major difference here
   is that we prove only the inf-sup condition (instead of positive definiteness of the bilinear form)
   and there is no mesh condition attached. With help of the inf-sup condition,
   we obtain the optimal rate of convergence in both the $H^1$ and $L^2$ norms.

   In this paper, we also study the superconvergence property of our FVM schemes.   Note that while the superconvergence theory of {\it finite element methods} (FEM) has reached its maturity
   (\cite{Babuvska.I;Strouboulis.T2001,Chen.C.M2001, L.R.Wahlbin1995,Zhu.QD;LinQ1989,Zienkiewicz;zhujianzhong2005}),
   the superconvergence analysis of  FVM has also been focused on the  linear schemes (see, e.g., \cite{Cai.Z1991, Xu.J.Zou.Q2009}).
   In this paper,  it is shown that for a 1D general elliptic  equation,   the superconvergence behavior of  FVM
   is similar to that of the counterpart finite element method. For instances,  the convergence rate at nodal points is  $h^{2r}$,
   the rate at interior Lobatto points (defined in Section 4) is $h^{r+2}$, the convergence rate of the derivative error at Gauss points  is   $h^{r+1}$.
   While in some special cases, some surprising superconvergence results have been found and proved.   That is, the convergence rate of the derivative error at all Gauss points
   can reach $h^{r+2}$ or $h^{2r}$, depending on the coefficient of the elliptic equations. The derivative convergence rate $h^{2r}$ doubles the global optimal rate $h^r$,
  which is much better than the counterpart finite element method's $h^{r+1}$ rate; the derivative convergence rate $h^{r+2}$ is one order higher than the counterpart finite element method's $h^{r+1}$.
  %
%   We would like to relate this with a classical superconvergence result of the finite element method  due to
%   Douglas-Dupont \cite{douglas-dupont}, which states that the convergent rate at nodal points is
%   $h^{2r}$ and it is sharp. In this respect, the discontinuous Galerkin (DG) method has $h^{2r+1}$
%   convergent rate for the numerical traces at the nodal points \cite{Celiker}.
%  % We see that the FVM has one-order higher convergence rate than the counterpart FEM at nodal points.
%   As for Lobatto points (defined in Section 4), the convergence rate $h^{r+2}$ is one order higher than the optimal global rate $h^{r+1}$,
%   which is the same as the counterpart FEM.

   %In practice, the FVM scheme at hand can be extended to higher dimensional tensor product spaces
%   by using the Gauss points in each spatial direction.
%   The biquadratic FVM scheme proposed in (\cite{Wang.T.Gu.Y2010, Li.Yonghai2010}) belongs to this class.
%   Unfortunately, the analysis is not a trivial extension of the 1D case and it is an ongoing project.
%   On the other hand, the extension of this work to higher dimensional simplicial element
%   is not straightforward due to the loss of Gauss points in the barycentric coordinates.

    We organize the rest of the paper as follows.
    In Section 2 we present an  arbitrary order FVM scheme for elliptic equations in one-dimensional setting.
    In particular, we use the Gauss points of order $r\ge 1$ to construct the control volumes
    and choose the trial space as the Lagrange finite element of $r$th order
    with the interpolation points being the Lobatto points of order $r$.
    In Section 3 we provide a unified proof for the inf-sup condition and establish the optimal convergence rate both under
    $H^1$ and $L^2$ norms.  In Section 4, we study the superconvergence property at some special points of our FVM schemes of any order.
    In Section 5,  a post-processing technique based on superconvergence results in the section 4 is proposed to recover the derivative of the solution.
    Numerical experiments supporting our theory are presented in Section 6. Some concluding remarks are provided in Section 7.

%In this paper, we only consider the Dirichlet boundary conditions, since the FVM is a local method, the convergence results for Dirichlet boundary conditions can be extended to other boundary conditions by the similar technique.
   In the rest of this paper, ``$A\lesssim B$" means that $A$ can be
  bounded by $B$ multiplied by a constant which is independent of
  the parameters which $A$ and $B$ may depend on. ``$A\sim B$" means $``A\lesssim B"$ and $``B\lesssim A"$.

\bigskip
\section{FVM schemes of any order}
\setcounter{equation}{0}
In this section, we develop finite volume schemes for the following two-point boundary value problem on the interval $\Omega=(a,b)$:
\begin{eqnarray}\label{Poisson}
\begin{aligned}
     - (\alpha u')'(x) +\beta(x) u'(x)+\gamma(x) u(x)=f(x),&&\ \forall x\in \Omega, \\
      u(a)=u(b)=0,&&
\end{aligned}
\end{eqnarray}
  where $\alpha \ge \alpha_0>0$, $\gamma-\frac{1}{2}\beta'\geq \kappa>0 $, $\alpha, \beta, \gamma \in L^{\infty}(\bar{\Omega})$, $f$ is a real-valued function defined on $\bar{\Omega}$.

We first introduce the primal partition and its corresponding trial space. For a positive integer N, let $\bZ_N:=\{1,\cdots,N\}$ and $a=x_0< x_1<\ldots<x_N=b$ be $N+1$ distinct points on $\bar{\Omega}$. For all $i \in \bZ_N$, we denote $\tau_i=[x_{i-1},x_i]$ and  $h_i=x_i-x_{i-1}$, let  $h=\max\limits_{i\in \bZ_N}h_i$ and
\[
  \cal {P}= \{\tau_i: i\in \bZ_N\}
\]
  be a partition of $\Omega$.
The  corresponding trial space is chosen as the Lagrange finite element of $r$th order, $r\geq 1$, defined by
\[
   U^r_{\cal P}=\{v\in C(\Omega): v|_{\tau_j}\in \mathbb{P}_r, \forall\tau_j\in{\cal P}, v|_{\partial\Omega}=0\},
\]
  where $\mathbb{P}_r$ is the set of all polynomials of degree no more than $r$. Obviously, $\dim U^r_{\cal P}=Nr-1$.

  We next present a dual partition and its corresponding test space. Let $G_1,\ldots,G_r$ be $r$ Gauss points, i.e., zeros of the Legendre polynomial of $r$th degree,
   on the interval $[-1,1]$.
  The Gauss points on each interval $\tau_i$ are defined as the affine transformations of
 % Recall the Gauss points from the introduction, we denote by $G_1,\ldots, G_r$ the zeros of $L_k(x)$.
  $G_j$ to $\tau_i$, that is,
\[
  g_{i,j}=\frac {1}{2}(x_i+x_{i-1}+h_iG_j), \quad j\in\bZ_r.
\]
  With these Gauss points,  we construct a dual partition
\[
\cal P'=\{\tau'_{1,0}, \tau'_{N,r}\}\cup \{\tau'_{i,j}: (i,j)\in \bZ_N \times \bZ_{r_i}\},
\]
  where
\[
  \tau'_{1,0}=[0,g_{1,1}], \tau'_{N,r}=[g_{N,r},1],\tau'_{i,j}=[g_{i,j}, g_{i,j+1}],
\]
here
\[
  r_i=\left\{\begin{array}{lll}
 r & \text{if} &i \in \bZ_{N-1}\\
r-1 & \text{if} &i=N
\end{array}
\right.
\text{and} \quad g_{i,r+1}=g_{i+1,1},\forall i \in\bZ_{N-1}.
\]
The test space $V_{\cal P'}$ consists of the piecewise constant functions with respect
to the partition $\cal P'$, which vanish on the intervals $\tau'_{1,0}\cup \tau'_{N,r}$.
In other words,
\[
V_{\cal P'}=\text{Span}\left\{\psi_{i,j}: (i,j)\in \bZ_N \times \bZ_{r_i}\right\},
\]
where $\psi_{i,j}=\chi_{[g_{i,j}, g_{i,j+1}]} $ is the characteristic function on the interval $\tau'_{i,j}$. We find that $\dim V_{\cal P'}=Nr-1=\dim U^r_{\cal P}$.

We are ready to present our finite volume schemes. Integrating  \eqref{Poisson}
on each control volume $[g_{i,j}, g_{i,j+1}], (i,j)\in \bZ_N \times \bZ_{r_i}$ yields
\[
  \int_{g_{i,j}}^{g_{i,j+1}} - (\alpha u')'(x)+\beta(x)u'(x)+\gamma(x)u(x)dx =\int_{g_{i,j}}^{g_{i,j+1}}f(x){dx}.
\]
 In other words,
\begin{equation}\label{conserve}
  \alpha(g_{i,j})u'(g_{i,j})-\alpha(g_{i,j+1})u'(g_{i,j+1})+\int_{g_{i,j}}^{g_{i,j+1}}\big( \beta(x)u'(x)+\gamma(x)u(x)\big)dx =\int_{g_{i,j}}^{g_{i,j+1}}f(x){dx}.
\end{equation}
 Let $w_{\cal P'}\in V_{\cal P'}$,  $w_{\cal P'}$ can be represented as
\[
w_{\cal P'}=\sum_{i=1}^{N}\sum_{j=1}^{r_i}w_{i,j}\psi_{i,j},
\]
 where $w_{i,j}'s$ are constants. Multiplying \eqref{conserve} with $w_{i,j}$ and then summing up for all $(i,j)\in \bZ_N\times \bZ_{r_i}$, we obtain
\begin{equation*}
\begin{split}
 \sum_{i=1}^{N}\sum_{j=1}^{r_i} w_{i,j}\left(\alpha(g_{i,j})u'(g_{i,j})-\alpha(g_{i,j+1})u'(g_{i,j+1})+\int_{g_{i,j}}^{g_{i,j+1}}\big(\beta(x)u'(x)+\gamma(x)u(x)\big)dx \right)
 \\=\int_a^b f(x)w_{\cal P'}(x) dx,
 \end{split}
\end{equation*}
  or equivalently,
\begin{equation*}
\begin{split}
   \sum_{i=1}^{N}\sum_{j=1}^{r} [w_{i,j}]\alpha(g_{i,j})u'(g_{i,j})+\sum_{i=1}^{N}\sum_{j=1}^{r_i} w_{i,j}\left(\int_{g_{i,j}}^{g_{i,j+1}}\big(\beta(x)u'(x)+\gamma(x)u(x)\big)dx \right)
   \\=\int_a^b f(x)w_{\cal P'}(x) dx,
\end{split}
\end{equation*}
   where $[w_{i,j}]=w_{i,j}-w_{i,j-1}$ is the jump of $w$ at the point $g_{i,j}, (i,j)\in \bZ_N \times \bZ_r$ with $w_{1,0}=0, w_{N,r}=0$ and $w_{i,0}=w_{i-1,r},2\leq i\leq N$.

We define the FVM bilinear form for all $v\in H^1_0(\Omega), w_{\cal P'}\in V_{\cal P'}$ by
\begin{equation}\label{bilinear1}
\begin{split}
   a_{\cal P}(v,w_{\cal P'})=&\sum_{i=1}^{N}\sum_{j=1}^{r} [w_{i,j}]\alpha(g_{i,j})v'(g_{i,j})\\
   &+\sum_{i=1}^{N}\sum_{j=1}^{r_i} w_{i,j}\left(\int_{g_{i,j}}^{g_{i,j+1}}\big(\beta(x)v'(x)+\gamma(x)v(x)\big)dx \right).
\end{split}
\end{equation}
%%%%%%%%%%%%%%%%%%%%%%%%%%%%%%%%%%%%%%%%%%%%%%%%%%%%%%%%%%%%%%%%%%%%%%%%%%%%%%%%%%%%%%%%%%%%%%%%%%

  The finite volume method for solving equation \eqref{Poisson} reads as :  Find $u_{\cal P}\in U^r_{\cal P}$ such that
\begin{equation}\label{bilinear}
   a_{\cal P}(u_{\cal P},w_{\cal P'})=(f,w_{\cal P'}),\ \ \forall w_{\cal P'}\in V_{\cal P '}.
\end{equation}

%{\it Remark:} The scheme is related only to the Gauss points, not to the Lobatto points.

%%%%%%%%%%%%%%%%%%%%%%%%%%%%%%%%%%%%%%%%%%%%%%%%%%%%%%%%%%%%%%%%%%%%%%%%%%%%%%%%%%%%%%

\bigskip

\section{Convergence Analysis}
\setcounter{equation}{0}
In this section, we prove the inf-sup condition and use it to establish some convergence properties of the FVM solution.

\subsection{Inf-sup condition}
We begin with some notations and definitions. First we introduce the {\it broken} Sobolev space
\[
    W^{m,p}_{\cal P}(\Omega)=\{ v\in C(\Omega) : v|_{\tau_i}\in W^{m,p}(\Omega), \forall \tau_i \in \cal P \},
\]
where   $m$ is a given positive integer and $1\leq p \leq \infty$. When $p=2$, we denote $H^m_{\cal P}$ for simplicity. For all $j\geq0$, we define a semi-norm by
\[
    |v|_{j,p,\cal P} = \left( \sum_{\tau_i\in \cal P} |v|_{j,p,\tau_i}^p  \right)^{1\over p}
\]
and  a norm by
\[
    \|v\|_{m,p,\cal P} =\left( \sum_{j=0}^m |v|_{j,p,\cal P}^p \right)^{1\over p}.
\]
We often use $|\cdot|_{m,\cal P}$  instead of $|\cdot|_{m, 2,\cal P}$ and  $\|\cdot\|_{m,\cal P}$ instead of $\|\cdot\|_{m, 2,\cal P}$ for simplicity.

Secondly, for all $w_{\cal P'}\in V_{\cal P'},\ w_{\cal P'}=\sum\limits_{i=1}^N\sum\limits_{j=1}^{r_i} w_{i,j}\psi_{i,j}$,
let
\[
\big|w_{\cal P'}\big|^2_{1,\cal P'}=\sum_{i=1}^{N}\sum_{j=1}^r h_i^{-1}[w_{i,j}]^2,\quad
 \big\|w_{\cal P'}\big\|^2_{0,\cal P'}= \sum_{i=1}^{N}\sum_{j=1}^{r_i} h_i w_{i,j}^2
\]
and
\[
     \big\|w_{\cal P'}\big\|_{\cal P'}^2 = \big|w_{\cal P'}\big|_{1,\cal P'}^2 + \big\|w_{\cal P'}\big\|_{0,\cal P'}^2.
\]
Noticing that $w_{1,0}=w_{N,r}=0$, it is easy to obtain
the following Poincar\'e type inequality
\begin{eqnarray}\label{Poincare}
    \big\|w_{\cal P'}\big\|_{0,\cal P'} \lesssim  \big|w_{\cal P'}\big|_{1,\cal P'}, \quad \forall w_{\cal P'}\in V_{\cal P'},
\end{eqnarray}
where the hidden constant  depends only on $\Omega$ and $r$.

Thirdly, we denote by $A_j, j\in \bZ_r$ the weights of the Gauss quadrature
\[
Q_r(F)=\sum_{j=1}^r A_j F(G_j)
\]
for computing the integral
\[
I(F)=\int_{-1}^1 F(x) dx.
\]
It is well-known that $Q_r(F)=I(F)$ for all $F\in \bP_{2r-1}(-1,1)$.
Naturally, the weights on interval $\tau_i,  i\in \bZ_N$ are
\[
A_{ij}=\frac{h_i}{2}A_j,\quad j\in \bZ_r.
\]

Before the presentation of the inf-sup condition, we define a linear mapping $\Pi_{\cal P}:U_{\cal P}^r \rightarrow V_{\cal P'}$ by
\[
    \Pi_{\cal P}v_{\cal P}=\sum_{i=1}^N \sum_{j=1}^{r_i} v_{i,j}\psi_{i,j},
\]
where the coefficients $v_{i,j}$ are determined by the constraints
\[
    [v_{i,j}]=A_{i,j}v'_{\cal P}(g_{i,j}),\ \ (i,j)\in \bZ_N\times \bZ_{r_i}.
\]
 Note that $v_{\cal P}\in U^r_{\cal P}$, the derivative $v'_{\cal P}\in \bP_{r-1}(\tau_i), i\in\bZ_N$, then
\[
\sum_{i=1}^N\sum_{j=1}^{r} A_{i,j}v_{\cal P}'(g_{i,j})=\int_a^b v_{\cal P}'(x) dx=v_{\cal P}(b)-v_{\cal P}(a)=0.
\]
Therefore,
\begin{eqnarray*}
v_{N,r-1}&=&\sum_{i=1}^N\sum_{j=1}^{r_i}[v_{i,j}]
=\sum_{i=1}^N\sum_{j=1}^{r} A_{i,j}v'_{\cal P}(g_{i,j})-A_{N,r}v'_{\cal P}(g_{N,r})\\
&=&-A_{N,r}v'_{\cal P}(g_{N,r}).
\end{eqnarray*}
In other words, we also have
\[
[v_{N,r}]=v_{N,r}-v_{N,r-1}=A_{N,r}v'_{\cal P}(g_{N,r}).
\]
Consequently,
\begin{eqnarray*}
|\Pi_{\cal P}v_{\cal P}|_{1,\cal P'}^2&=&\sum_{i=1}^{N}\sum_{j=1}^r h_i^{-1}[v_{i,j}]^2
=\sum_{i=1}^{N}\sum_{j=1}^r h_i^{-1} \left(A_{i,j}v'_{\cal P}(g_{i,j})\right)^2 \thicksim\int_a^b \left(v'_{\cal P}(x)\right)^2dx.
\end{eqnarray*}
Namely, we have
\begin{eqnarray}\label{equ-norm}
|\Pi_{\cal P}v_{\cal P}|_{1,\cal P'}\thicksim |v_{\cal P}|_{1,\cal P}.
\end{eqnarray}

With all these preparations, we are now ready to present the inf-sup condition of $a_{\cal P}(\cdot,\cdot)$.

\begin{theorem}
 Assume that the mesh size $h$ is sufficiently small, then
  \begin{equation}\label{infsup}
\inf_{v_{\cal P}\in U^r_{\cal P}}\sup_{w_{\cal P'} \in V_{\cal P'} }
\frac{a_{\cal P} (v_{\cal P} ,w_{\cal P'} )}{\|v_{\cal P} \|_{1,\cal P} \|w_{\cal P'}\|_{\cal P'}}\ge c_0,
\end{equation}
where $c_0>0$ is a constant depending only on $r,\alpha_0,\kappa$ and $\Omega$.
\end{theorem}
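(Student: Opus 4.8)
The plan is to verify the inf-sup condition by the standard device of exhibiting, for each $v_{\cal P}\in U^r_{\cal P}$, an explicit test function that realizes the supremum up to a constant. The natural candidate is $w_{\cal P'}=\Pi_{\cal P}v_{\cal P}$, the image of $v_{\cal P}$ under the connection map introduced above, for which the norm equivalence \eqref{equ-norm} together with the Poincar\'e inequality \eqref{Poincare} already yields $\|\Pi_{\cal P}v_{\cal P}\|_{\cal P'}\lesssim |\Pi_{\cal P}v_{\cal P}|_{1,\cal P'}\thicksim |v_{\cal P}|_{1,\cal P}\le \|v_{\cal P}\|_{1,\cal P}$. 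Thus it suffices to prove the coercivity-type lower bound $a_{\cal P}(v_{\cal P},\Pi_{\cal P}v_{\cal P})\gtrsim \|v_{\cal P}\|_{1,\cal P}^2$; dividing by $\|v_{\cal P}\|_{1,\cal P}\,\|\Pi_{\cal P}v_{\cal P}\|_{\cal P'}$ then produces the constant $c_0$.

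To obtain this lower bound I would split $a_{\cal P}(v_{\cal P},\Pi_{\cal P}v_{\cal P})$ along the two sums in \eqref{bilinear1}. In the principal part, the defining constraint $[v_{i,j}]=A_{i,j}v'_{\cal P}(g_{i,j})$ turns the first sum into $\sum_{i,j}A_{i,j}\alpha(g_{i,j})(v'_{\cal P}(g_{i,j}))^2$. Since $(v'_{\cal P})^2\in\bP_{2r-2}$ on each $\tau_i$ and the $r$-point Gauss rule is exact on $\bP_{2r-1}$, the weight-sum collapses \emph{exactly} to $|v_{\cal P}|_{1,\cal P}^2$; combined with $\alpha\ge\alpha_0$ and positivity of the Gauss weights this gives the clean bound $\ge \alpha_0|v_{\cal P}|_{1,\cal P}^2$. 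Because $\Pi_{\cal P}v_{\cal P}$ is piecewise constant on the dual cells, the second (lower-order) sum is exactly $\int_a^b (\Pi_{\cal P}v_{\cal P})(\beta v'_{\cal P}+\gamma v_{\cal P})\,dx$, which I would rewrite as $\int_a^b v_{\cal P}(\beta v'_{\cal P}+\gamma v_{\cal P})\,dx + \int_a^b(\Pi_{\cal P}v_{\cal P}-v_{\cal P})(\beta v'_{\cal P}+\gamma v_{\cal P})\,dx$. Integrating $\beta v_{\cal P}v'_{\cal P}=\tfrac12\beta(v_{\cal P}^2)'$ by parts and using $v_{\cal P}(a)=v_{\cal P}(b)=0$, the first integral equals $\int_a^b(\gamma-\tfrac12\beta')v_{\cal P}^2\,dx\ge\kappa\|v_{\cal P}\|_0^2\ge 0$, so this genuinely $O(1)$ contribution has a harmless sign precisely because of the hypothesis $\gamma-\tfrac12\beta'\ge\kappa$.

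The main obstacle is the residual integral $\int_a^b(\Pi_{\cal P}v_{\cal P}-v_{\cal P})(\beta v'_{\cal P}+\gamma v_{\cal P})\,dx$, which must be shown to be a true $O(h)$ perturbation rather than $O(1)$; this is the only place the smallness of $h$ enters. The key lemma is the approximation estimate $\|\Pi_{\cal P}v_{\cal P}-v_{\cal P}\|_0\lesssim h|v_{\cal P}|_{1,\cal P}$. To prove it I would exploit the cumulative structure $v_{i,j}=v_{\cal P}(x_{i-1})+\sum_{l\le j}A_{i,l}v'_{\cal P}(g_{i,l})$ --- the contributions of the completed elements $\tau_k$, $k<i$, summing exactly to $v_{\cal P}(x_{i-1})$ by the same Gauss exactness --- so that on each dual cell $\tau'_{i,j}$ the difference $v_{i,j}-v_{\cal P}(x)$ involves only the local partial quadrature against $\int_{x_{i-1}}^{x} v'_{\cal P}$, and is therefore controlled, via an inverse inequality on $\bP_{r-1}(\tau_i)$, by $h_i^{1/2}\|v'_{\cal P}\|_{0,\tau_i}$. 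Squaring, summing over the cells, and using $\|\beta v'_{\cal P}+\gamma v_{\cal P}\|_0\lesssim\|v_{\cal P}\|_{1,\cal P}$ bounds the residual by $Ch|v_{\cal P}|_{1,\cal P}\|v_{\cal P}\|_{1,\cal P}$. Assembling the three estimates gives $a_{\cal P}(v_{\cal P},\Pi_{\cal P}v_{\cal P})\ge\alpha_0|v_{\cal P}|_{1,\cal P}^2-Ch|v_{\cal P}|_{1,\cal P}\|v_{\cal P}\|_{1,\cal P}$, and since $\|v_{\cal P}\|_{1,\cal P}\thicksim|v_{\cal P}|_{1,\cal P}$ on $H^1_0(\Omega)$ by Poincar\'e, for $h$ sufficiently small the perturbation is absorbed and $a_{\cal P}(v_{\cal P},\Pi_{\cal P}v_{\cal P})\ge\tfrac{\alpha_0}{2}|v_{\cal P}|_{1,\cal P}^2\gtrsim\|v_{\cal P}\|_{1,\cal P}^2$, which completes the argument.
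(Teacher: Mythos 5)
Your proposal is correct, and it shares the paper's overall skeleton --- the same test function $\Pi_{\cal P}v_{\cal P}$, the same exact evaluation of the principal term using Gauss exactness of $(v'_{\cal P})^2\in\bP_{2r-2}(\tau_i)$ together with $\alpha\ge\alpha_0$, and the same absorption of a lower-order perturbation for small $h$ --- but it treats the lower-order term $I_2$ by a genuinely different mechanism. The paper performs summation by parts to write $I_2=-\sum_{i,j}A_{i,j}v'_{\cal P}(g_{i,j})V(g_{i,j})$ with $V(x)=\int_a^x(\beta v'_{\cal P}+\gamma v_{\cal P})\,ds$, recognizes this as the Gauss quadrature of $-\int_a^b v'_{\cal P}V\,dx$ (which integration by parts and $\gamma-\tfrac12\beta'\ge\kappa$ make $\ge\kappa\|v_{\cal P}\|_0^2$), and then bounds the quadrature errors $E_i$ via the derivative remainder formula of Davis--Rabinowitz; this yields an $O(h^2)$ perturbation, but it tacitly differentiates $(v'_{\cal P}V)$ up to order $2r$, i.e.\ it uses derivatives of $\beta$ and $\gamma$ up to order $2r-1$, which is stronger than the stated assumption $\beta,\gamma\in L^\infty(\bar{\Omega})$. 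You instead write $I_2$ exactly as $\int_a^b(\Pi_{\cal P}v_{\cal P})(\beta v'_{\cal P}+\gamma v_{\cal P})\,dx$, split $\Pi_{\cal P}v_{\cal P}=v_{\cal P}+(\Pi_{\cal P}v_{\cal P}-v_{\cal P})$, recover the same coercive piece $\int_a^b(\gamma-\tfrac12\beta')v_{\cal P}^2\,dx\ge\kappa\|v_{\cal P}\|_0^2$ from the first part, and control the residual by the approximation lemma $\|\Pi_{\cal P}v_{\cal P}-v_{\cal P}\|_0\lesssim h\,|v_{\cal P}|_{1,\cal P}$, whose proof (telescoping of the completed-element Gauss sums to $v_{\cal P}(x_{i-1})$, then a bound on the local partial sums --- Cauchy--Schwarz with the positive Gauss weights suffices; the inverse inequality you invoke is not even needed) again uses only Gauss exactness. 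Your route produces an $O(h)$ perturbation rather than $O(h^2)$, which is immaterial since both are absorbed for $h$ small; what it buys is consistency with the paper's minimal regularity $\beta,\gamma\in L^\infty$ (plus the standing hypothesis on $\gamma-\tfrac12\beta'$), and, like the paper's argument, it requires no quasi-uniformity of the mesh. Two small points to make explicit if you write it up: the dual cells straddling the interior nodes and the two boundary cells need separate (easy) treatment in the lemma, using $v_{i,r}=v_{\cal P}(x_i)$ exactly and $v_{\cal P}(a)=v_{\cal P}(b)=0$; and your final constant in fact only uses $\gamma-\tfrac12\beta'\ge0$ quantitatively, so it can be taken independent of $\kappa$, which is harmless relative to the statement.
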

\begin{proof} It follows from the bilinear form \eqref{bilinear1} that
\[
a_{\cal P}(v_{\cal P},\Pi_{\cal P}v_{\cal P})=I_1+I_2,\ \ \forall v_{\cal P}\in U_{\cal P}^r
\]
  with
\[
I_1=\sum_{i=1}^{N}\sum_{j=1}^{r} [v_{i,j}]\alpha(g_{i,j})v'_{\cal P}(g_{i,j}),\quad I_2=\sum_{i=1}^{N}\sum_{j=1}^{r_i} v_{i,j}\int_{g_{i,j}}^{g_{i,j+1}}\big(\beta(x)v'_{\cal P}(x)+\gamma(x)v_{\cal P}(x)\big)dx .
\]
Since $(v'_{\cal P})^2 \in \bP_{2r-2}(\tau_i), i\in \bZ_N$, we have
\[
    \int_{x_{i-1}}^{x_i} (v'_{\cal P}(x))^2 dx = \sum_{j=1}^r A_{i,j}(v'_{\cal P}(g_{i,j}))^2.
\]
Therefore,
\begin{eqnarray*}
I_1\ge  \alpha_0\sum_{i=1}^{N}\sum_{j=1}^{r} A_{i,j}(v'_{\cal P}(g_{i,j}))^2= \alpha_0|v_{\cal P}|_{1,\cal P}^2.
\end{eqnarray*}

We next estimate $I_2$. Let $V(x)=\int_a^x \left( \beta(s)v'_{\cal P}(s)+\gamma(s)v_{\cal P}(s) \right) ds $ and denote by
\begin{eqnarray*}
    E_{i}&=&\int_{x_{i-1}}^{x_{i}} v'_{\cal P}(x)V(x)dx-\sum_{j=1}^{r} A_{i,j} v'_{\cal P}(g_{i,j})V(g_{i,j}),
\end{eqnarray*}
the error of Gauss quadrature in the interval $\tau_i$, $i\in\bZ_N$.
Then
\begin{eqnarray*}
I_2&=&- \sum_{i=1}^{N}\sum_{j=1}^{r}  [v_{i,j}] V(g_{i,j})= - \int_a^b v'_{\cal P}(x)V(x)dx + \sum_{i=1}^N E_i.
%&=& \int_a^b \left(\gamma(x)-\frac{\beta'(x)}{2}\right)v^2_{\cal P}(x) dx+\sum_{i=1}^{N} E_{i}.
\end{eqnarray*}
Using the fact that $v_{\cal P}(a)=v_{\cal P}(b)=0$ and
\[
    \int_a^b \beta(x)v'_{\cal P}(x)v_{\cal P}(x)dx=-\frac{1}{2}\int_a^b \beta'(x)v^2_{\cal P}(x)dx,
\]
we obtain
\begin{equation}\label{est3}
    -\int_a^b v'_{\cal P}(x)V(x)dx =\int_a^b \left( \gamma(x)-\frac{\beta'(x)}{2}\right)v^2_{\cal P}(x)dx \geq \kappa \|v_{\cal P}\|_0^2.
\end{equation}
On the other hand, by\cite{DavisRabinowitz1984}(p98, (2.7.12)), for all $i\in \bZ_N$
\[
E_{i}=\frac{h_i^{2r+1}(r!)^4}{(2r+1)[(2r)!]^3}(v'_{\cal P}V)^{(2r)}(\xi_i),
\]
where $\xi_i\in \tau_i$.
By the  Leibnitz formula of derivatives, we have
\begin{equation*}
    \left|(v'_{\cal P}V)^{(2r)}(\xi_i)\right|=\sum_{k=r+1}^{2r} \binom{2r}{k}\left| (\beta v'_{\cal P}+\gamma v_{\cal P})^{(k-1)} (v'_{\cal P})^{(2r-k)}(\xi_i)\right| \ge c_1 \|v_{\cal P}\|_{r,\infty,\tau_i}^2
\end{equation*}
with
\[
   c_1=\max\left\{\|\beta\|_{2r-1,\infty,\tau_i}^2, \|\gamma\|_{2r-1,\infty,\tau_i}^2\right\}\sum_{k=r+1}^{2r} \binom{2r}{k}.
\]
Therefore, by the inverse inequality that
\[
  \|v_{\cal P}\|_{r,\infty,\tau_i} \lesssim h_i^{-(r-\frac12)}|v_{\cal P}|_{1,\tau_i},
\]
  we have
\begin{equation*}\label{quad err}
|E_{i}| \leq \frac{c_1(r!)^4}{(2r+1)[(2r)!]^3} h_i^2 |v_{\cal P}|_{1,\tau_i}^2.
\end{equation*}
Combining the above estimates, we obtain
\[
    I_2 \geq \kappa \|v_{\cal P}\|^2_{0,\cal P}-\frac{c_1(r!)^4}{(2r+1)[(2r)!]^3}h^2\left|v_{\cal P}\right|^2_{1,\cal P}.
\]
%where in the last estimate we have used the  Poincar\'e inequality $\|v_{\cal P}\|_{0,\Omega} \lesssim  |v_{\cal P}|_{1,\Omega}$ (since $v_{\cal P}(a)=v_{\cal P}(b)=0$).

Then for sufficiently small $h$, we have
\begin{eqnarray*}
    a_{\cal P}(v_{\cal P},\Pi_{\cal P}v_{\cal P}) &\ge& \frac{\alpha_0}{2}|v_{\cal P}|_{1,\cal P}^2 +\frac{\kappa}{2}\|v_{\cal P}\|_{0,\cal P}^2\ge \frac12\min\{\alpha_0,\kappa\}\|v_{\cal P}\|_{1,\cal P}^2.
\end{eqnarray*}
Noticing \eqref{Poincare} and \eqref{equ-norm}, we obtain
\[
    \|\Pi_{\cal P}v_{\cal P}\|_{\cal P'} \lesssim  \|v_{\cal P}\|_{1,\cal P}.
\]
Therefore, for any $v_{\cal P}\in U^r_{\cal P}$,
\[
\sup_{w_{\cal P'}\in V_{\cal P'}} \frac{a_{\cal P}(v_{\cal P},w_{\cal P'})}{\|w_{\cal P'}\|_{\cal P'}}
\ge \frac{a_{\cal P}(v_{\cal P},\Pi_{\cal P}v_{\cal P})}{\|\Pi_{\cal P}v_{\cal P}\|_{\cal P'}}\ge c_0 \|v_{\cal P}\|_{1,\cal P},
\]
where $c_0$ is a constant depending only on $r,\alpha_0,\kappa$ and $\Omega$.
The inf-sup condition \eqref{infsup} follows.
\end{proof}

\begin{remark}
In the above proof, the partition ${\cal P}$ does not need to satisfy any quasi-uniform or shape-regularity
condition. Moreover,  \eqref{infsup} holds even the order of polynomials in each sub-interval $\tau_i$ are different.
\end{remark}

\subsection{Energy norm error estimate}
Following \cite{Xu.J.Zou.Q2009},  we
use the inf-sup condition \eqref{infsup} and the framework of Petrov-Galerkin method
to present and analyze the finite volume element method \eqref{bilinear}.

We first introduce a semi-norm and a norm in the  broken space
$
   H_{\cal P}^{2}(\Omega)= W^{2,2}_{\cal P}(\Omega)
$
  by
\[
  |v|^2_{\cal P}=\sum_{\tau_i\in\cal P}|v|_{1,\tau_i}^2+h_i^2|v|_{2,\tau_i}^2,\ \  \|v\|_{\cal P}^2 = \|v\|_{0,\cal P}^2+|v|_{\cal P}^2.
\]
 It is straightforward to show  that,
\[
  |v_{\cal P}|_{\cal P}\sim |v_{\cal P}|_{1,\cal P}, \qquad \|v_{\cal P}\|_{\cal P}\sim \|v_{\cal P}\|_{1,\cal P},\qquad \forall v_{\cal P}\in U_{\cal P}^r.
\]
With these equivalences,  the inf-sup condition \eqref{infsup} can be written as
\begin{equation}\label{infsup2}
\inf_{v_{\cal P}\in U_{\cal P}^r}\sup_{w_{\cal P'} \in V_{\cal P'} }
\frac{a_{\cal P} (v_{\cal P} ,w_{\cal P'} )}{\|v_{\cal P} \|_{\cal P} \|w_{\cal P'}\|_{\cal P'}}\ge c_2,
\end{equation}
where $c_2>0$ also depends only on $r,\alpha_0, \kappa$ and $\Omega$. Moreover, we define a discrete semi-norm $\big|\cdot\big|_{G,1}$ for all $v\in H_0^1(\Omega)$ by
\[
\big|v\big|_{G,1}=\left(\sum_{i=1}^N\sum_{j=1}^{r}A_{i,j}(v'(g_{i,j}))^2\right)^{\frac 12}.
\]

 We next discuss the relationship between $|\cdot|_{\cal P}$ and $|\cdot|_{G,1}$. First
\[
   |v_{\cal P}|_{G,1}= |v_{\cal P}|_{1,\cal P}\thicksim |v_{\cal P}|_{\cal P},\quad \forall v_{\cal P}\in U_{\cal P}^r.
\]
  On the other hand, for all $v\in H_0^{1}(\Omega)\cap H_{\cal P}^{2}(\Omega)$,
\[
   (v'(g_{i,j}))^2\lesssim  h^{-1}_{i}\|v'\|_{L^2(\tau_i)}^2+h_{i}\|v''\|_{L^2(\tau_i)}^2, \quad (i,j)\in \bZ_N \times \bZ_r,
\]
where the hidden constant depends only on $r$.
Thus by the fact  $A_{ij}\le h_i$, we have
\begin{eqnarray*}
  |v|_{G,1}^2&=&\sum_{i=1}^N\sum_{j=1}^r A_{ij}(v'(g_{i,j}))^2\\
  &\lesssim & \sum_{i=1}^N h_i\left(h^{-1}_{i}\|v'\|_{L^2(\tau_i)}^2+h_{i}\|v''\|_{L^2(\tau_i)}^2\right) =  |v|_{\cal P}^2.
\end{eqnarray*}
Namely,
\[
|v|_{G,1}\lesssim |v|_{\cal P},\quad \forall v\in H_0^1(\O) \cap H_{\cal P}^2(\O),
\]
where the hidden constant depends only on $r$.

\medskip

We are ready to show our main result.
\begin{theorem}
Assume that $u$ is the solution of \eqref{Poisson}, $u_{\cal P}$ is the solution of \eqref{bilinear}. Then the finite volume bilinear form $a_{\cal P}(\cdot,\cdot)$
 is variationally exact:
 \begin{equation}\label{exact}
a_{\cal P}(u,w_{\cal P'})=(f,w_{\cal P'}),\ \ \ \ \forall\  w_{\cal P'}\in \ V_{\cal P'},
  \end{equation}
 and bounded : for all\ $v\in\ H_0^1(\Omega)\cap H_{\cal P}^{2}(\Omega),\ w_{\cal P'}\in \ V_{\cal P'}$,
\begin{equation}\label{bounded}
|a_{\cal P}(v,w_{\cal P'})|\le M \|v\|_{\cal P} \|w_{\cal P'}\|_{\cal P'},
\end{equation}
  where the constant $M>0$  depends only on $r,\alpha_0,\kappa$ and $\O$.
Consequently,
\begin{equation}\label{totalestimate}
\big\|u-u_{\cal P}\big\|_{\cal P}\le \frac{M}{c_2} \inf_{v_{\cal P}\in U^r_{{\cal P}}}\big\|u-v_{\cal P}\big\|_{\cal P},
\end{equation}
where $c_2$ is the same as in \eqref{infsup2}.
\end{theorem}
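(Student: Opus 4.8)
The plan is to prove the three assertions in the order stated, since the quasi-optimality estimate \eqref{totalestimate} is a formal consequence of variational exactness \eqref{exact}, boundedness \eqref{bounded}, and the inf-sup condition \eqref{infsup2}. The variational exactness is essentially built into the derivation of the scheme in Section 2. Since $u$ solves \eqref{Poisson} pointwise, integrating the equation over each control volume $[g_{i,j},g_{i,j+1}]$ reproduces the conservation identity \eqref{conserve} \emph{exactly}, with no quadrature or approximation involved. Multiplying by $w_{i,j}$, summing over $(i,j)\in\bZ_N\times\bZ_{r_i}$, and carrying out the same rearrangement that produced \eqref{bilinear1} yields precisely $a_{\cal P}(u,w_{\cal P'})=(f,w_{\cal P'})$. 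So I would simply retrace the computation of Section 2 with the true solution in place of $u_{\cal P}$ to record \eqref{exact}.

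For boundedness I would split $a_{\cal P}(v,w_{\cal P'})=A+B$ into the jump term $A=\sum_{i,j}[w_{i,j}]\alpha(g_{i,j})v'(g_{i,j})$ and the volume term $B=\sum_{i,j}w_{i,j}\int_{g_{i,j}}^{g_{i,j+1}}(\beta v'+\gamma v)\,dx$, and bound each by Cauchy--Schwarz with weights matched to the dual norms. For $A$, I pair $h_i^{-1/2}[w_{i,j}]$ with $h_i^{1/2}v'(g_{i,j})$: the first collects to $|w_{\cal P'}|_{1,\cal P'}$, and since the Gauss weights satisfy $A_{i,j}=\frac{h_i}{2}A_j\sim h_i$ with constants depending only on $r$, the second is controlled by $|v|_{G,1}\lesssim|v|_{\cal P}$, which is exactly the trace-type inequality established just before the theorem. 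For $B$, I pair $h_i^{1/2}w_{i,j}$ (collecting to $\|w_{\cal P'}\|_{0,\cal P'}$) with $h_i^{-1/2}\int_{\tau'_{i,j}}|\beta v'+\gamma v|\,dx$; Cauchy--Schwarz on each integral together with $|\tau'_{i,j}|\lesssim h_i$ and the $L^\infty$ bounds on $\beta,\gamma$ reduces this to $\|\beta v'+\gamma v\|_{0,\Omega}\lesssim\|v\|_{1,\cal P}\sim\|v\|_{\cal P}$, since the control volumes partition $\Omega$. Combining the two estimates yields \eqref{bounded} with $M$ depending only on the data.

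The estimate \eqref{totalestimate} then follows from the Babu\v{s}ka/Petrov--Galerkin framework of \cite{Xu.J.Zou.Q2009}. Subtracting \eqref{bilinear} from \eqref{exact} gives Galerkin orthogonality $a_{\cal P}(u-u_{\cal P},w_{\cal P'})=0$ for all $w_{\cal P'}\in V_{\cal P'}$. Fixing any $v_{\cal P}\in U^r_{\cal P}$, applying \eqref{infsup2} to $u_{\cal P}-v_{\cal P}\in U^r_{\cal P}$, inserting $u$, and using orthogonality followed by \eqref{bounded} yields $c_2\|u_{\cal P}-v_{\cal P}\|_{\cal P}\le M\|u-v_{\cal P}\|_{\cal P}$. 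The triangle inequality then gives $\|u-u_{\cal P}\|_{\cal P}\le(1+M/c_2)\|u-v_{\cal P}\|_{\cal P}$, and taking the infimum over $v_{\cal P}$ produces the quasi-optimal bound \eqref{totalestimate} (with the constant absorbed into $M/c_2$ as displayed).

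The one genuinely delicate point is proving boundedness against the \emph{broken} $H^2$-type norm $\|\cdot\|_{\cal P}$ rather than against the discrete space. Since $v\in H_0^1(\Omega)\cap H_{\cal P}^2(\Omega)$ is not assumed piecewise polynomial, the point values $v'(g_{i,j})$ in term $A$ cannot be handled by an inverse or norm-equivalence argument on polynomials; they must instead be controlled through the embedding $H^2(\tau_i)\hookrightarrow C^1(\bar\tau_i)$ encoded in $|v|_{G,1}\lesssim|v|_{\cal P}$. A second, minor care point is the node-straddling control volumes $\tau'_{i,r}=[g_{i,r},g_{i+1,1}]$, which occupy parts of two adjacent elements; there I would split the integral in $B$ at the node $x_i$ so that each piece is estimated within a single element, keeping all constants independent of the mesh.
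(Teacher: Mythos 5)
Your treatment of the first two assertions is correct and coincides with the paper's own proof: exactness is obtained exactly as you describe, by integrating \eqref{Poisson} over each control volume (Newton--Leibniz) and retracing the Section~2 rearrangement with $u$ in place of $u_{\cal P}$; and boundedness is proved by the same Cauchy--Schwarz pairing the paper uses --- jumps weighted as $h_i^{-1/2}[w_{i,j}]$ against $h_i^{1/2}v'(g_{i,j})$ (equivalently $A_{i,j}^{1/2}v'(g_{i,j})$, since $A_{i,j}\sim h_i$), the latter collected into $|v|_{G,1}\lesssim |v|_{\cal P}$, which is precisely the trace-type inequality the paper establishes just before the theorem; and $h_i^{1/2}w_{i,j}$ against the volume integrals, collected into $\|w_{\cal P'}\|_{0,{\cal P}'}$ times $\|v\|_{1,\cal P}$. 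Your remark about splitting the node-straddling volumes $\tau'_{i,r}$ at $x_i$ is a correct handling of a detail the paper passes over silently.

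The one genuine gap is in your final step. The argument you give --- Galerkin orthogonality, inf-sup applied to $u_{\cal P}-v_{\cal P}$, boundedness, then the triangle inequality --- is the Babu\v{s}ka--Aziz route and produces the constant $1+M/c_2$, which is \emph{strictly larger} than the stated $M/c_2$; the parenthetical claim that this can be ``absorbed into $M/c_2$ as displayed'' is not a valid move, since a larger constant cannot be absorbed into a smaller one. To obtain \eqref{totalestimate} with the constant $M/c_2$ exactly as stated, one needs the Xu--Zikatanov observation (the second reference the paper cites at this point): the Petrov--Galerkin solution operator $P:v\mapsto v_{\cal P}$ is a well-defined idempotent on the Hilbert space $H_0^1(\Omega)\cap H^2_{\cal P}(\Omega)$ equipped with $\|\cdot\|_{\cal P}$, bounded by $\|P\|\le M/c_2$ via \eqref{infsup2} and \eqref{bounded}, and for a nontrivial idempotent on a Hilbert space one has $\|I-P\|=\|P\|$; writing $u-u_{\cal P}=(I-P)(u-v_{\cal P})$ for arbitrary $v_{\cal P}\in U^r_{\cal P}$ then gives \eqref{totalestimate} with no additive $1$. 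Alternatively, your argument stands as written if one either reads the theorem's constant as $1+M/c_2$, or notes that $M+c_2$ is also an admissible boundedness constant in \eqref{bounded} and restates \eqref{totalestimate} with respect to it; but as a proof of the inequality literally displayed, the absorption step fails and the Hilbert-space identity is the missing ingredient.
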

\begin{proof} First, the formula \eqref{exact} follows  by multiplying \eqref{Poisson} with an arbitrary function $w_{\cal P'}\in V_{\cal P'} $
and then using Newton-Leibniz formula on each control volume $[g_{i,j},g_{i,j+1}], (i,j)\in \bZ_N \times \bZ_{r_i}$.

Next we show \eqref{bounded}. By the Cauchy-Schwartz inequality, from \eqref{bilinear1} there holds for all\ $v\in\ H_0^1(\Omega)\cap H_{\cal P}^{2}(\Omega),\ w_{\cal P'}\in \ V_{\cal P'}$ that
\begin{eqnarray*}
a_{\cal P}(v,w_{\cal P'})&\leq&|v|_{G,1}\left(\sum_{i=1}^{N}\sum_{j=1}^{r} \frac{\alpha^2(g_{i,j})}{A_{ij}}([w_{i,j}])^2\right)^\frac12 \\
 &+&\max(|\beta|,|\gamma|)\left(\sum_{i=1}^{N}\sum_{j=1}^{r_i}h_iw_{i,j}^2\right)^{\frac 12}\left(\sum_{i=1}^{N}(|v|_{1,\tau_i}^2+\|v\|_{0,\tau_i}^2)\right)^{\frac 12} \\
&\leq&M \|v\|_{\cal P} \|w_{\cal P'}\|_{\cal P'},
\end{eqnarray*}
where the constant $M$ depends only on $r,\alpha_0,\kappa$ and $\O$.

Finally, combining the inf-sup condition \eqref{infsup2}, \eqref{exact} and \eqref{bounded}, we derive \eqref{totalestimate}
 following similar arguments as in Babuska and Aziz (\cite{BabuskaAziz1972}), or Xu and Zikatanov
(\cite{XuZikatanov2003}).

\end{proof}
\begin{coro}
  Assume that $u\in H_0^1(\Omega)\cap H^{r+1}_{\cal P}(\Omega)$ is the solution of \eqref{Poisson}, and
  $u_{\cal P}$ is the solution of FVM scheme \eqref{bilinear}, then
\begin{equation}\label{optimalh1}
   \|u-u_{\cal P}\|_{1}\lesssim h^r|u|_{r+1,\cal P},
\end{equation}
  where the hidden constant is independent of  $\cal P$.
\end{coro}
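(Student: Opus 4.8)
The plan is to derive the optimal $H^1$ rate directly from the quasi-optimal error estimate \eqref{totalestimate}, treating this corollary as an application of Theorem~3.4 together with standard local polynomial approximation. Since the right-hand side of \eqref{totalestimate} is an infimum over the trial space, it suffices to exhibit a single good competitor $v_{\cal P}\in U^r_{\cal P}$ and to bound $\|u-v_{\cal P}\|_{\cal P}$.

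First I would take $v_{\cal P}=I_{\cal P}u$, the Lagrange interpolant of $u$ onto $U^r_{\cal P}$ using the $r$th order Lobatto points of each $\tau_i$ as interpolation nodes. Because these nodes include the element endpoints, $I_{\cal P}u$ is continuous across interelement boundaries and vanishes at $a$ and $b$, so $I_{\cal P}u\in U^r_{\cal P}$ is admissible. On each $\tau_i$ the classical interpolation error estimate gives, for $m=0,1,2$,
\[
  |u-I_{\cal P}u|_{m,\tau_i}\lesssim h_i^{\,r+1-m}\,|u|_{r+1,\tau_i},
\]
with a constant depending only on $r$, obtained from the reference-element estimate followed by affine scaling, and in particular independent of $\cal P$.

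Next I would assemble these local bounds into the broken norm $\|\cdot\|_{\cal P}$. Recalling $\|w\|_{\cal P}^2=\|w\|_{0,\cal P}^2+\sum_{\tau_i\in\cal P}\big(|w|_{1,\tau_i}^2+h_i^2|w|_{2,\tau_i}^2\big)$, the $m=0$ term contributes a factor $h^{2(r+1)}$, while both the $m=1$ term and the scaled $m=2$ term contribute $h^{2r}$; summing over $\tau_i$ and retaining the dominant power yields
\[
  \|u-I_{\cal P}u\|_{\cal P}\lesssim h^r\,|u|_{r+1,\cal P}.
\]
Inserting this into \eqref{totalestimate} gives $\|u-u_{\cal P}\|_{\cal P}\lesssim h^r|u|_{r+1,\cal P}$. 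Finally I would pass from the broken norm to the genuine $H^1(\Omega)$ norm: since $u\in H_0^1(\Omega)$ and $u_{\cal P}\in U^r_{\cal P}\subset H_0^1(\Omega)$, the difference lies in $H_0^1(\Omega)$, and directly from the definitions $\|w\|_1^2=\|w\|_0^2+|w|_1^2\le\|w\|_{0,\cal P}^2+|w|_{\cal P}^2=\|w\|_{\cal P}^2$, the omitted $h_i^2|w|_{2,\tau_i}^2$ terms being nonnegative. Hence $\|u-u_{\cal P}\|_1\le\|u-u_{\cal P}\|_{\cal P}\lesssim h^r|u|_{r+1,\cal P}$, which is the claim.

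I do not expect a genuine obstacle here, as the statement is essentially bookkeeping on top of the quasi-optimality already proved. The one point that requires care is to keep the approximation constant independent of $\cal P$: this is guaranteed because the interpolation estimates are purely local and the assembly uses only nonnegativity and the definition $h=\max_i h_i$, so that no quasi-uniformity or shape-regularity enters, consistent with the remark following the inf-sup theorem.
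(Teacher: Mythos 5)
Your proposal is correct and follows essentially the same route as the paper: both invoke the quasi-optimality bound \eqref{totalestimate}, choose the Lobatto-point Lagrange interpolant $u_I$ as the competitor, bound $\|u-u_I\|_{\cal P}$ (including the scaled second-derivative terms $h_i^2|u-u_I|_{2,\tau_i}^2$) by standard local approximation theory, and conclude via $\|u-u_{\cal P}\|_1\le\|u-u_{\cal P}\|_{\cal P}$. Your write-up merely makes explicit the local scaling argument and the mesh-independence of the constants, which the paper leaves to ``standard approximation theory.''
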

\begin{proof}
   It follows from the definition of $\|\cdot\|_{\cal P}$ and \eqref{totalestimate} that
\[
  \|u-u_{\cal P}\|_1\le \|u-u_{\cal P}\|_{\cal P}\lesssim \inf\limits_{v_{\cal P}\in U^r_{\cal P}}\|u-v_{\cal P}\|_{\cal P}.
\]
  Noticing that
\[
   \inf\limits_{v_{\cal P}\in U^r_{\cal P}}\|u-v_{\cal P}\|_{\cal P}\le \|u-u_I\|_{1}+\left(\sum_{i=1}^{N}h_i^2|u-u_I|^2_{2,\tau_i}\right)^{\frac 12},
\]
where $u_I$ is the Lagrange interpolation of $u$ at the Lobatto points (defined in the next section)
in the trial space $U^r_{\cal P}$. By the standard approximation theory, we obtain the estimate \eqref{optimalh1}.
\end{proof}

\section{Superconvergence}
\setcounter{equation}{0}
In this section, we will present the superconvergence properties of the FVM solution. To this end, we need to use the interpolation of a function on the so-called  Lobatto points. This kind of interpolation has been used in the literature for superconvergence analysis, see, e.g., \cite{Zhang1, Zhang2}.
We denote $L_1,L_2,\cdots,L_{r-1}$ the zeros of $P'_r(x)$, where $P_r(x)$ is the Legendre polynomial of order $r$. Moreover, we denote $L_0=-1,L_r=1$ and $\bN_r=\{0,1,\cdots,r\}$ for $r\ge 1$. The family of points $L_j ,j\in \bN_r$ are called Lobatto points of degree $r$. The Lobatto points on $\tau_i$ are defined as the affine transformations of $L_j$ to $\tau_i$, i.e,
\[
    l_{i.j}=\frac12(x_i+x_{i-1}+h_iL_j),\quad j\in \bN_r.
\]
Let $u_I\in U^r_{\cal P}$ be the interpolation of $u$ such that
\[
   u_I(l_{i,j})=u(l_{i,j}),\quad (i,j)\in \bZ_N \times \bN_r,
\]
  then by \cite{Zhu.QD;LinQ1989}(p146, (1.2))
\begin{eqnarray}\label{est1}
    |(u-u_I)'(g_{i,j})|\lesssim h^{r}|u|_{r+2,1,\omega'_{i,j}},
\end{eqnarray}
where $\omega'_{i,j}=(g_{i,j-1},g_{i,j})$.

\begin{theorem}\label{h1estimate}
    Let $u\in H^1_0(\Omega)\cap H_{\cal P}^{r+2}(\Omega)$ be the solution of  \eqref{Poisson}, and $u_{\cal P}$ the solution of FVM scheme \eqref{bilinear}. Then
    \begin{eqnarray}\label{bilest}
        |a_{\cal P}(u-u_I,w_{\cal P'})| \lesssim h^{r+1}\left(|u|_{r+2,\cal P}+|u|_{r+1,\infty,\cal P}\right)\|w_{\cal P'}\|_{\cal P'},\quad \forall w_{\cal P'} \in V_{\cal P'}.
    \end{eqnarray}
    Consequently,
    \begin{eqnarray}\label{h1est}
        \|u_I-u_{\cal P}\|_1\lesssim h^{r+1}\left(|u|_{r+2,\cal P}+|u|_{r+1,\infty,\cal P}\right).
    \end{eqnarray}
\end{theorem}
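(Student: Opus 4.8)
The plan is to prove the bilinear-form bound \eqref{bilest} by splitting $a_{\cal P}(u-u_I,w_{\cal P'})$ according to \eqref{bilinear1} into the flux term
\[
I_1=\sum_{i=1}^N\sum_{j=1}^r [w_{i,j}]\,\alpha(g_{i,j})(u-u_I)'(g_{i,j})
\]
and the lower-order term
\[
I_2=\sum_{i=1}^N\sum_{j=1}^{r_i} w_{i,j}\int_{g_{i,j}}^{g_{i,j+1}}\big(\beta(x)(u-u_I)'(x)+\gamma(x)(u-u_I)(x)\big)\,dx,
\]
and bounding each separately. Once \eqref{bilest} is in hand, \eqref{h1est} will follow quickly: since $a_{\cal P}(u,w_{\cal P'})=(f,w_{\cal P'})=a_{\cal P}(u_{\cal P},w_{\cal P'})$ by \eqref{exact} and \eqref{bilinear}, Galerkin orthogonality gives $a_{\cal P}(u_I-u_{\cal P},w_{\cal P'})=-a_{\cal P}(u-u_I,w_{\cal P'})$; applying the inf-sup condition \eqref{infsup2} to $u_I-u_{\cal P}\in U^r_{\cal P}$ and using $\|u_I-u_{\cal P}\|_1\le\|u_I-u_{\cal P}\|_{\cal P}$ then yields the claimed $H^1$ estimate.

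For $I_1$, I would apply Cauchy--Schwarz against the weight $h_i^{-1}$, writing $|I_1|\le |w_{\cal P'}|_{1,\cal P'}\big(\sum_{i,j} h_i\,\alpha^2(g_{i,j})((u-u_I)'(g_{i,j}))^2\big)^{1/2}$, and then invoke the Gauss-point superconvergence estimate \eqref{est1}, $|(u-u_I)'(g_{i,j})|\lesssim h^r|u|_{r+2,1,\omega'_{i,j}}$. Converting the $W^{r+2,1}$ seminorm on the short interval $\omega'_{i,j}$ to an $L^2$ seminorm costs one extra factor of $h_i^{1/2}$ by Cauchy--Schwarz, and since the $\omega'_{i,j}$ are mutually disjoint the sum returns the broken seminorm $|u|_{r+2,\cal P}^2$. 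This produces the contribution $h^{r+1}|u|_{r+2,\cal P}\,|w_{\cal P'}|_{1,\cal P'}$.

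The main obstacle is $I_2$, and specifically the convection term $\int_{g_{i,j}}^{g_{i,j+1}}\beta(u-u_I)'dx$: a naive bound only exploits $(u-u_I)'=O(h^r)$ and loses one power of $h$. To recover the extra order I would integrate by parts on each control volume, $\int_{g_{i,j}}^{g_{i,j+1}}\beta(u-u_I)'dx=[\beta(u-u_I)]_{g_{i,j}}^{g_{i,j+1}}-\int_{g_{i,j}}^{g_{i,j+1}}\beta'(u-u_I)dx$, then perform Abel summation in $j$ against the $w_{i,j}$ (using the conventions $w_{1,0}=w_{N,r}=0$ and $g_{i,r+1}=g_{i+1,1}$) so that the boundary contributions regroup into $\sum_{i,j}[w_{i,j}]\beta(g_{i,j})(u-u_I)(g_{i,j})$. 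Here the pointwise interpolation bound $|(u-u_I)(g_{i,j})|\lesssim h^{r+1}|u|_{r+1,\infty,\tau_i}$ (note the Gauss points are not interpolation nodes) supplies the full order $h^{r+1}$; Cauchy--Schwarz against $h_i^{-1}$ then yields $h^{r+1}|u|_{r+1,\infty,\cal P}\,|w_{\cal P'}|_{1,\cal P'}$, which is the source of the $|u|_{r+1,\infty,\cal P}$ term in \eqref{bilest}. The remaining pieces --- the volume integral $\int\beta'(u-u_I)$ and the reaction term $\int\gamma(u-u_I)$ --- are handled directly by Cauchy--Schwarz against $h_i$ together with the standard bound $\|u-u_I\|_{0,\tau_i}\lesssim h^{r+1}|u|_{r+1,\tau_i}$, giving $h^{r+1}|u|_{r+1,\cal P}\,\|w_{\cal P'}\|_{0,\cal P'}$; since $|u|_{r+1,\cal P}\lesssim|u|_{r+1,\infty,\cal P}$ on the bounded domain $\Omega$, these are absorbed. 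Collecting the bounds on $I_1$ and $I_2$ against $\|w_{\cal P'}\|_{\cal P'}\ge \max(|w_{\cal P'}|_{1,\cal P'},\|w_{\cal P'}\|_{0,\cal P'})$ gives \eqref{bilest}.
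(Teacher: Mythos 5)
Your proposal is correct and takes essentially the same route as the paper: the paper's own proof also integrates by parts on the convection term so that the boundary contributions regroup with the jumps $[w_{i,j}]$ (its $I_1$ contains exactly the terms $\alpha(g_{i,j})(u-u_I)'(g_{i,j})-\beta(g_{i,j})(u-u_I)(g_{i,j})$, and its $I_2$ the $(\gamma-\beta')(u-u_I)$ volume term), then uses \eqref{est1}, the pointwise Lobatto-interpolation bound at the Gauss points, and Cauchy--Schwarz, followed by Galerkin orthogonality and the inf-sup condition to deduce \eqref{h1est}. Your write-up just makes the summation-by-parts step and the weighting in Cauchy--Schwarz more explicit than the paper does.
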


\begin{proof} Recall the definition of bilinear form \eqref{bilinear1},  integral by part we obtain
\[
     a_{\cal P}(u-u_I,w_{\cal P'})=I_1+I_2
\]
 with
 \begin{eqnarray*}
     &&I_1=\sum_{i=1}^{N}\sum_{j=1}^{r}[w_{i,j}]\left(\alpha(g_{i,j})(u-u_I)'(g_{i,j})-\beta(g_{i,j})(u-u_I)(g_{i,j})\right),\\
      &&I_2=\sum_{i=1}^{N}\sum_{j=1}^{r}w_{i,j}\int_{g_{i,j}}^{g_{i,j+1}}(\gamma-\beta')(u-u_I).
 \end{eqnarray*}
 For all $i\in \bZ_{N}$, note that
\[
  (u-u_I)(g_{i,j})\lesssim h_i^{r+1}|u|_{r+1,\infty,\tau_i},\ \ |u|_{r+2,1,\tau_i} \lesssim h^{\frac12}_{i}|u|_{r+2,\tau_i},
\]
by the Cauchy-Schwartz inequality, \eqref{est1} and the standard approximation theory,
\begin{eqnarray*}
        a_{\cal P}(u-u_I,w_{\cal P'})&\lesssim& \|w_{\cal P'}\|_{\cal P'}\left( \sum_{i=1}^{N}\sum_{j=1}^{r}\left( h_i^{2(r+1)}|u|^2_{r+2,2,\tau_i}+h_i^{2r+3}|u|^2_{r+1,\infty,\tau_i}+\|u-u_I\|^2_{0,\tau_i}  \right) \right)^{\frac12}\\
        &\lesssim& h^{r+1}\left(|u|_{r+2,\cal P}+|u|_{r+1,\infty,\cal P}\right)\|w_{\cal P'}\|_{\cal P'}.
\end{eqnarray*}
    The desired result \eqref{bilest} is proved.

    We next show \eqref{h1est}. By the inf-sup condition \eqref{infsup},
    \[
    c_0 \|u_I-u_{\cal P}\|_{1}\leq \sup_{w_{\cal P'}\in V_{\cal P'}}\frac{a_{\cal P}(u_{\cal P}-u_I,w_{\cal P'})}{\|w_{\cal P'}\|_{\cal P'}}=\sup_{w_{\cal P'}\in V_{\cal P'}}\frac{a_{\cal P}(u-u_I,w_{\cal P'})}{\|w_{\cal P'}\|_{\cal P'}}.
    \]
 Combining the above inequality with \eqref{bilest}, we derive \eqref{h1est}.
\end{proof}

As a direct consequence of \eqref{h1est}, we have the following $L^2$ error estimate.
\begin{corollary}\label{l2estimate}
Let $u\in H^1_0(\Omega)\cap H^{r+2}_{\cal P}(\Omega)$ be the solution of  \eqref{Poisson}, and $u_{\cal P}$ the solution of FVM scheme \eqref{bilinear}, then
    \begin{eqnarray}\label{optimall2}
        \|u-u_{\cal P}\|_0 \lesssim h^{r+1}\|u\|_{r+2,\cal P},
    \end{eqnarray}
 where the hidden positive constant is independent of $\cal P$.
 \end{corollary}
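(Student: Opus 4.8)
The plan is to derive the optimal $L^2$ estimate \eqref{optimall2} directly from the superconvergent energy bound \eqref{h1est}, bypassing the usual Aubin--Nitsche duality argument. The key observation is that \eqref{h1est} already controls $\|u_I-u_{\cal P}\|_1$ at the rate $h^{r+1}$, which is one power of $h$ better than the optimal energy rate $h^r$ for $\|u-u_{\cal P}\|_1$; this extra power is exactly what an $L^2$ estimate needs, so no duality is required. First I would split by the triangle inequality through the Lobatto interpolant $u_I$,
$$
\|u-u_{\cal P}\|_0\le \|u-u_I\|_0+\|u_I-u_{\cal P}\|_0 .
$$

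For the interpolation term, standard approximation theory for the degree-$r$ Lagrange interpolation at the Lobatto points gives $\|u-u_I\|_0\lesssim h^{r+1}|u|_{r+1,\cal P}$, which is already of the desired order and is dominated by $h^{r+1}\|u\|_{r+2,\cal P}$. For the discrete term, since $u_I-u_{\cal P}\in U^r_{\cal P}$, I would use the trivial inequality $\|u_I-u_{\cal P}\|_0\le\|u_I-u_{\cal P}\|_1$ and then invoke \eqref{h1est} to obtain $\|u_I-u_{\cal P}\|_0\lesssim h^{r+1}(|u|_{r+2,\cal P}+|u|_{r+1,\infty,\cal P})$. Adding the two contributions yields
$$
\|u-u_{\cal P}\|_0\lesssim h^{r+1}\left(|u|_{r+1,\cal P}+|u|_{r+2,\cal P}+|u|_{r+1,\infty,\cal P}\right).
$$

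The only nontrivial point, and the main obstacle, is to absorb the $L^\infty$ seminorm $|u|_{r+1,\infty,\cal P}$ into $\|u\|_{r+2,\cal P}$ without losing a power of $h$. A purely element-wise Sobolev embedding on $\tau_i$ introduces a factor $h_i^{-1/2}$ and would degrade the rate to $h^{r+1/2}$, so that route must be avoided. Instead I would exploit that $u$ is globally smooth and invoke the one-dimensional Sobolev embedding $H^{r+2}(\Omega)\hookrightarrow W^{r+1,\infty}(\Omega)$, which gives $|u|_{r+1,\infty,\cal P}\lesssim \|u\|_{r+2,\cal P}$ with a constant depending only on $\Omega$. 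Since the remaining seminorms $|u|_{r+1,\cal P}$ and $|u|_{r+2,\cal P}$ are trivially dominated by $\|u\|_{r+2,\cal P}$, combining the estimates produces \eqref{optimall2} with a constant independent of $\cal P$, as claimed.
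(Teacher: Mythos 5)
Your proof is correct and follows essentially the same route as the paper: the same triangle-inequality split through the Lobatto interpolant $u_I$, the superconvergence bound \eqref{h1est} to control $\|u_I-u_{\cal P}\|_0$ (the paper invokes the Poincar\'e inequality where you use the trivial bound $\|\cdot\|_0\le\|\cdot\|_1$, an immaterial difference), and standard interpolation theory for $\|u-u_I\|_0$. Your explicit absorption of $|u|_{r+1,\infty,\cal P}$ into $\|u\|_{r+2,\cal P}$ via the global embedding $H^{r+2}(\Omega)\hookrightarrow W^{r+1,\infty}(\Omega)$ is a step the paper performs silently; as you note, the element-wise embedding would lose a factor $h^{-1/2}$, and your global-embedding fix tacitly requires $u\in H^{r+2}(\Omega)$ rather than only the stated broken regularity---an assumption the paper implicitly makes as well when it writes the global norm $\|u\|_{r+1}$.
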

\begin{proof}
    By the triangle inequality,
    \[
        \|u-u_{\cal P}\|_0\leq \|u-u_I\|_0+\|u_I-u_{\cal P}\|_0.
    \]
    By the Poincar\'{e} inequality and \eqref{h1est}, we have
    \[
        \|u_I-u_{\cal P}\|_0 \lesssim |u_I-u_{\cal P}|_1\lesssim h^{r+1}\|u\|_{r+2,\cal P}.
    \]
    Moreover, by the standard approximation theory,
    \[
        \|u-u_I\|_0 \lesssim h^{r+1}\|u\|_{r+1} \lesssim h^{r+1}\|u\|_{r+2,\cal P}.
    \]
The desired estimate \eqref{optimall2} follows.
\end{proof}
\begin{remark}
In the above $L^2$ error estimate, we do not use the so-called Aubin-Nitsche technique.
However, we need slightly stronger regularity assumption on the exact solution $u$.
\end{remark}
\bigskip

We next study the superconvergence property at the nodes $x_i, i\in \bZ_{N-1}$.
\begin{theorem}\label{supconv_vectices}
     Let $u$ be the solution of  \eqref{Poisson}, and $u_{\cal P}$ the solution of FVM scheme \eqref{bilinear}. If $u\in W^{2r+1,\infty}_{\cal P}(\Omega)$, then
     \begin{eqnarray}\label{e_vertice1}
        |(u-u_{\cal P})(x_{i})|\lesssim h^{2r} \sum_{j=r+1}^{2r+1} |u|_{j,\infty,\cal P},\ \ \forall i\in \bZ_{N-1}.
    \end{eqnarray}
\end{theorem}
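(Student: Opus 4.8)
The plan is to combine a duality argument based on the adjoint Green's function with the Petrov--Galerkin orthogonality of the scheme, thereby reducing the nodal error to a single consistency error that is controlled by the exactness of Gauss quadrature. First I would introduce the ordinary continuous bilinear form $a(v,w)=\int_\Omega(\alpha v'w'+\beta v'w+\gamma vw)\,dx$ and the adjoint Green's function $z\in H^1_0(\Omega)$ determined by $a(v,z)=v(x_i)$ for all $v\in H^1_0(\Omega)$; equivalently $z$ solves $-(\alpha z')'-(\beta z)'+\gamma z=\delta_{x_i}$ with $z(a)=z(b)=0$. Since the coefficients are smooth, $z$ is smooth on each of $[a,x_i]$ and $[x_i,b]$ with all derivatives bounded by constants depending only on $\alpha,\beta,\gamma,\Omega$, and $x_i$ is a node of $\cal P$. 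Writing $e=u-u_{\cal P}\in H^1_0(\Omega)$ and taking $v=e$ yields the representation $(u-u_{\cal P})(x_i)=a(e,z)$.

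Next, let $z_I\in U^r_{\cal P}$ be the Lobatto interpolant of $z$ (which lies in $U^r_{\cal P}$ since $z(a)=z(b)=0$), and recall the orthogonality $a_{\cal P}(e,w_{\cal P'})=0$ for all $w_{\cal P'}\in V_{\cal P'}$, which follows on subtracting \eqref{bilinear} from \eqref{exact}. Choosing $w_{\cal P'}=\Pi_{\cal P}z_I$ and inserting it, I would split
\[
(u-u_{\cal P})(x_i)=\underbrace{a(e,z-z_I)}_{\mathrm{A}}+\underbrace{\big(a(e,z_I)-a_{\cal P}(e,\Pi_{\cal P}z_I)\big)}_{\mathrm{B}}.
\]
Term $\mathrm{A}$ is routine: by continuity of $a$, the energy estimate $\|e\|_1\lesssim h^r|u|_{r+1,\cal P}$ from \eqref{optimalh1}, and the interpolation bound $\|z-z_I\|_{1,\cal P}\lesssim h^r|z|_{r+1,\cal P}\lesssim h^r$, one obtains $|\mathrm{A}|\lesssim h^{2r}|u|_{r+1,\infty,\cal P}$, which is already of the asserted order.

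The hard part is Term $\mathrm{B}$, the consistency error. I would expand both forms. By the definition of $\Pi_{\cal P}$ (its jumps being $A_{i,j}z_I'(g_{i,j})$), the flux part of $a_{\cal P}(e,\Pi_{\cal P}z_I)$ equals $\sum_{i,j}A_{i,j}(\alpha e'z_I')(g_{i,j})$, i.e.\ exactly the $r$-point Gauss approximation of $\int_\Omega\alpha e'z_I'\,dx$; the zeroth-order part is compared with $\int_\Omega(\beta e'+\gamma e)z_I\,dx$ in the same spirit. Thus $\mathrm{B}$ reduces to a sum over elements of Gauss-quadrature errors for integrands of the type $\alpha e'z_I'$, $\beta e'z_I$, $\gamma e z_I$. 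On each $\tau_i$ the $r$-point rule is exact on $\bP_{2r-1}$, and since $u_{\cal P}'$ and $z_I'$ lie in $\bP_{r-1}(\tau_i)$ their derivatives of order $\ge r$ vanish; hence in the derivative form of the quadrature error $c_rh_i^{2r+1}(\cdot)^{(2r)}(\xi_i)$ the surviving high-order contributions come only from the smooth coefficients and from $u$, and a Leibniz count shows that $u$ enters through derivatives of order between $r+1$ and $2r+1$. Summing the $O(h_i^{2r+1})$ element errors over the $O(h^{-1})$ elements gives $|\mathrm{B}|\lesssim h^{2r}\sum_{j=r+1}^{2r+1}|u|_{j,\infty,\cal P}$, and combining with Term $\mathrm{A}$ yields \eqref{e_vertice1}.

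The main obstacle is precisely this bookkeeping in Term $\mathrm{B}$: handling the zeroth-order terms, where $\Pi_{\cal P}z_I$ is only a piecewise constant whose control-volume values are partial Gauss sums of $z_I'$ rather than a nodal sampling of $z_I$, and controlling the pieces that involve $u_{\cal P}$ (e.g.\ by writing $u_{\cal P}'=u'-e'$ and using the vanishing of high derivatives of the polynomial factors, so that every $h^{2r}$-order term is expressed through derivatives of $u$ and of the Green's function). One also needs enough smoothness of $\alpha,\beta,\gamma$ for the integrands to admit $2r$ derivatives, together with the uniform bounds on $z$; these are absorbed into the hidden constant, which is why the final estimate involves only norms of $u$.
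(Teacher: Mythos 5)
Your strategy is sound and would prove the theorem, but it is packaged differently from the paper's proof, so a comparison is in order. The paper never interpolates the Green's function: it observes that since both $u$ and $u_{\cal P}$ satisfy the local conservation law \eqref{conserve} on every control volume, the residual flux $\alpha e'-\e$ (with $\e(x)=\int_a^x(\beta e'+\gamma e)\,dy$) takes one and the same constant value $C_0$ at \emph{all} Gauss points (their identity \eqref{est4}); inserting this into $e(x_i)=\int_a^b(\alpha e'-\e)\,\partial_y G(x_i,y)\,dy$ and applying Gauss quadrature elementwise makes the quadrature sum collapse to $C_0\int_a^b\partial_y G\,dy=0$, leaving two quadrature-error terms $E_1,E_2$ involving the exact Green's function. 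Your route replaces the explicit constant $C_0$ by the equivalent Petrov--Galerkin orthogonality $a_{\cal P}(e,\cdot)=0$ tested with $\Pi_{\cal P}z_I$; this costs you the extra interpolation term $\mathrm{A}$ (harmless, $O(h^{2r})$ as you show), but buys you that $z_I'\in\bP_{r-1}$ truncates the Leibniz expansion and that only about $r+1$ derivatives of the Green's function are needed, rather than the $2r$ the paper uses for $E_2$. Your Term $\mathrm{B}$ does reduce exactly to elementwise Gauss-quadrature errors of $(\alpha e'-\e)z_I'$: Abel summation over the dual cells (using $w_{1,0}=w_{N,r}=0$ and $[w_{i,j}]=A_{i,j}z_I'(g_{i,j})$) turns the zeroth-order part of $a_{\cal P}(e,\Pi_{\cal P}z_I)$ into $-\sum_{i,j}A_{i,j}z_I'(g_{i,j})\e(g_{i,j})$, the Gauss sum of $-\int\e z_I'$, which pairs with $\int(\beta e'+\gamma e)z_I=-\int\e z_I'$ after integration by parts; this is exactly the step you flagged as the main obstacle, and it goes through. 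Incidentally, your construction is closer in spirit to the paper's proof of Theorem \ref{supconv_gauss}, where a discrete Green's function $G_{\cal P}$ with piecewise-polynomial derivative plays the role of your $z_I$.

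One imprecision to fix in Term $\mathrm{B}$: the Leibniz expansion produces terms of the form $\alpha^{(a)}e^{(b+1)}(z_I')^{(c)}$ and $(\beta e'+\gamma e)^{(b)}(z_I')^{(c)}$ in which the error $e$ carries only $b+1\le r$ derivatives; these are \emph{not} expressible "through derivatives of $u$" by writing $u_{\cal P}'=u'-e'$ --- they are genuine error factors multiplied by high derivatives of the coefficients, and they must be bounded by an error estimate. This is precisely how the paper handles them: $|e|_{j,\infty,\tau_k}\lesssim h^{-j}\|e\|_{0,\infty,\tau_k}+h^{r+1-j}|e|_{r+1,\infty,\tau_k}$ together with $\|e\|_{0,\infty,\cal P}\lesssim h^{r+\frac12}|u|_{r+1,\infty,\cal P}$, the latter obtained from the superconvergence result of Theorem \ref{h1estimate}. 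For your purposes the cruder bound $\|e\|_{0,\infty}\lesssim\|e\|_1\lesssim h^r|u|_{r+1,\cal P}$ (1D Sobolev embedding plus \eqref{optimalh1}, which you already invoke for Term $\mathrm{A}$) suffices to keep the $(2r)$-th derivative uniformly bounded by norms of $u$, so the elementwise $O(h_k^{2r+1})$ errors still sum to $O(h^{2r})$ with constants of the asserted form. So this is a fixable bookkeeping point, not a gap in the strategy; note also that both your argument and the paper's tacitly require $\alpha,\beta,\gamma$ smooth (about $2r$ derivatives), beyond the $L^\infty$ regularity stated in the theorem.
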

\begin{proof}Let $e=u-u_{\cal P}$ and
\[
\e(x)=\int_a^x \big( \beta(y)e'(y)+\gamma(y)e(y) \big) dy,\quad \forall x\in [a,b].
\]
By the construction of the FVM scheme, both $u$ and $u_{\cal P}$ satisfy \eqref{conserve}, then
 for all $(i,j)\in \bZ_{N-1}\times \bZ_{r_i}$,
\begin{eqnarray*}
-\big( \alpha(g_{i,j+1})e'(g_{i,j+1})-\alpha(g_{i,j})e'(g_{i,j}) \big)+\e(g_{i,j+1})-\e(g_{i,j})=0.
\end{eqnarray*}
Namely,
\begin{equation}\label{est4}
\alpha(g_{i,j})e'(g_{i,j})-\e(g_{i,j})=C_0,
\end{equation}
where $C_0$ is a constant independent of $i,j$.

On the other hand, let $G(\cdot,\cdot)$ be the Green function for the problem \eqref{Poisson}. Then for all $v\in H_0^1(\Omega)$,
\[
v(x)=\int_a^b\alpha(y)v'(y)\frac{\partial G}{\partial y}(x,y) dy+\int_a^b \big( \beta(y)v'(y)+\gamma(y)v(y) \big)G(x,y)dy.
\]
In particular, for all $i \in \bZ_{N-1}$,
\[
e(x_i)=\int_a^b \alpha(y)e'(y)\frac{\partial G}{\partial y}(x_i,y) dy+\int_a^b \big( \beta(y)e'(y)+\gamma(y)e(y) \big)G(x_i,y)dy.
\]
Noting that $G(x_i,a)=G(x_i,b)=0$, by\eqref{est4}
\begin{eqnarray*}
e(x_i)&=&\int_a^b \big(\alpha(y)e'(y)-\e(y)\big)\frac{\partial G}{\partial y}(x_i,y) dy\\
      &=&\sum_{k=1}^{N}\sum_{j=1}^{r} A_{k,j}\big(\alpha(g_{k,j})e'(g_{k,j})-\e(g_{k,j})\big)\frac{\partial G}{\partial y}(x_i,g_{k,j})+E_1\\
      &=&C_0\int_a^b\frac{\partial G}{\partial y}(x_i,y)dy +E_1+E_2=E_1+E_2,
\end{eqnarray*}
where
\begin{eqnarray*}
E_1&=&\left.\sum_{k=1}^N \frac{h_k^{2r+1}(r!)^4}{(2r+1)[(2r)!]^3}\left[\big( (\alpha(y)e'(y)-\e(y)\big)\frac{\partial G}{\partial y}(x_i,y)\right]^{(2r)}\right |_{y=\xi_k},\\
E_2&=&\left.-\sum_{k=1}^N \frac{h_k^{2r+1}(r!)^4}{(2r+1)[(2r)!]^3}\left[\frac{\partial G}{\partial y}(x_i,y) \right]^{(2r)}\right|_{y=\eta_k}
\end{eqnarray*}
with $\xi_k,\eta_k \in \tau_k$.

We next estimate $E_1$ and $E_2$, respectively.
Note that $e^{(j)}=u^{(j)}$ for $j>r$ and the Green function $G(x_i,\cdot)$ has bounded derivatives of any order on each $\tau_k, k\in \bZ_{N}$, then
 \begin{eqnarray*}
    |E_1| &\lesssim&\sum_{k=1}^N h_k^{2r+1}\left( \sum_{j=0}^r |e|_{j,\infty,\tau_k}+\sum_{j=r+1}^{2r+1} |u|_{j,\infty,\tau_k}\right)\\
          &\lesssim &\sum_{k=1}^{N}h_k^{2r+1}\left(\sum_{j=0}^{r}h^{-j}\|e\|_{0,\infty,\tau_k}+\sum_{j=r+1}^{2r+1} |u|_{j,\infty,\tau_k}\right)\\
          &\lesssim &h^{r}\|e\|_{0,\infty,\cal P}+h^{2r+1}\sum_{j=r+1}^{2r+1} |u|_{j,\infty,\cal P},
 \end{eqnarray*}
  where in the second inequality we have used the fact that\cite{adams}
\[
    |e|_{j,\infty,\tau_k} \lesssim h^{-j}_k\|e\|_{0,\infty,\tau_k}+h^{r+1-j}_k|e|_{r+1,\infty,\tau_k},\ \ \forall j\in\bZ_{r}.
\]
%by $H^1 \hookrightarrow C^{\infty}$ and \eqref{optimalh1}, we get
%\[
%    |e|_{j,\infty} \leq \|e\|_{1} \lesssim h^r|u|_{r+1,\cal P}.
%\]
%Thus, we obtain
%\[
%    |e|_{j,\infty,\tau_i} \lesssim h^{r+1-j}|u|_{r+1,\infty,\tau_i}+h^{2r+1-j}|u|_{2r+1,\infty,\tau_i}.
%\]
 We next consider the term $\|e\|_{0,\infty,\cal P}$. By the triangular inequality and the inverse inequality
% \[
%   |u_I-u_{\cal P}|_{0,\infty,\cal P}\lesssim h^{r+1}|u_I-u_{\cal P}|_ {r+1,\infty,\cal P}=h^{r+1}|u|_{r+1,\infty,\cal P},
%\]
\begin{eqnarray*}
    \|u_I-u_{\cal P}\|_{0,\infty,\cal P}& \lesssim& h^{\frac12}|u_I-u_{\cal P}|_{1}\lesssim h^{\frac12}(|u-u_I|_1+|u_I-u_{\cal P}|_1)\\
    &\lesssim& h^{r+\frac12}|u|_{r+1}\lesssim h^{r+\frac12}|u|_{r+1,\infty,\cal P},
\end{eqnarray*}
  we have
\[
   \|e\|_{0,\infty,\cal P}\le \|u-u_I\|_{0,\infty,\cal P}+\|u_I-u_{\cal P}\|_{0,\infty,\cal P}\lesssim h^{r+\frac 12}|u|_{r+1,\infty,\cal P}.
\]
Therefore,
\begin{eqnarray*}
    |E_1|  \lesssim  h^{2r+\frac 12} \sum_{j=r+1}^{2r+1}|u|_{j,\infty,\cal P}.
\end{eqnarray*}
 As for $E_2$,  a direct calculation shows that
\[
   |E_2|  \lesssim  h^{2r}\|G\|_{2r,\infty,\cal P}.
\]
Combining  $E_1$ with $E_2$, we obtain \eqref{e_vertice1}.
\end{proof}

As a direct consequence of \eqref{e_vertice1}, we have
\begin{equation}\label{avg-nod}
E_{node}=\left(\frac{1}{N}\sum_{i=1}^{N} [ (u-u_{\cal P})(x_i) ]^2\right)^\frac{1}{2} \lesssim h^{2r}.
 \end{equation}

   We next estimate the term $e(x_i)-e(x_{i-1}), i\in\bZ_{N}$ which plays a critical role in our later superconvergence analysis.
\begin{theorem}\label{supconv_vectrices2} For all $i\in\bZ_N$,
\begin{equation}\label{supconv_vectices1}
   | (u-u_{\cal P})(x_i)-(u-u_{\cal P})(x_{i-1})|\lesssim h^{2r+1} \sum_{j=r+1}^{2r+1}|u|_{j,\infty,\cal P}.
\end{equation}
\end{theorem}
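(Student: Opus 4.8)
The plan is to imitate the Green-function argument used in the proof of Theorem \ref{supconv_vectices}, but to subtract the two nodal representations \emph{before} invoking the Gauss rule, so that the integral kernel becomes a \emph{difference} of Green-function derivatives that carries an extra power of $h$. Keeping the notation $e=u-u_{\cal P}$ and $\e$ of the previous proof, I would first record that the representation
\[
e(x)=\int_a^b\big(\alpha(y)e'(y)-\e(y)\big)\frac{\partial G}{\partial y}(x,y)\,dy
\]
holds for every $x\in[a,b]$: both sides are continuous in $x$, and the boundary values $e(a)=e(b)=0$ are reproduced because $G(x,a)=G(x,b)=0$. Subtracting this identity at $x_i$ and $x_{i-1}$ gives, for every $i\in\bZ_N$,
\[
e(x_i)-e(x_{i-1})=\int_a^b\big(\alpha e'-\e\big)(y)\,H(y)\,dy,\qquad H(y)=\frac{\partial G}{\partial y}(x_i,y)-\frac{\partial G}{\partial y}(x_{i-1},y).
\]
The two properties of $H$ I would exploit are $\int_a^b H\,dy=0$ (again from $G(x,a)=G(x,b)=0$) and, by the mean value theorem in the first argument together with the boundedness of the mixed derivatives $\partial_x\partial_y^{\ell}G$ away from the diagonal, $\|H^{(\ell)}\|_{L^\infty(\tau_k)}\lesssim h_i$ on every $\tau_k$ whose closure avoids $x_{i-1},x_i$.

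Next I would apply the $r$-point Gauss rule on each $\tau_k$ to the product $(\alpha e'-\e)H$, exactly as in Theorem \ref{supconv_vectices}, writing
\[
e(x_i)-e(x_{i-1})=\sum_{k=1}^N\sum_{j=1}^r A_{k,j}\big(\alpha e'-\e\big)(g_{k,j})\,H(g_{k,j})+\tilde E_1,
\]
with $\tilde E_1$ the total Gauss error of $(\alpha e'-\e)H$. Invoking \eqref{est4}, the quadrature sum collapses to $C_0\sum_{k,j}A_{k,j}H(g_{k,j})$, and since $\int_a^b H=0$ this equals $-C_0\tilde E_2$, where $\tilde E_2$ is the Gauss error of $H$ alone. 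Thus $e(x_i)-e(x_{i-1})=\tilde E_1-C_0\tilde E_2$, and it remains to bound the two pieces.

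For the dominant term $\tilde E_1$ I would split the index set into the $O(1)$ subintervals adjacent to $x_{i-1}$ or $x_i$ and the rest. On the far subintervals $H$ and all its derivatives carry the factor $h_i$, so that part is $h$ times the bound obtained for $E_1$ in Theorem \ref{supconv_vectices}, i.e. $\lesssim h^{2r+\frac32}\sum_{j=r+1}^{2r+1}|u|_{j,\infty,\cal P}$; on the few near-diagonal subintervals I would recycle verbatim the Leibniz expansion and inverse-inequality estimate of that theorem, together with $\|e\|_{0,\infty,\cal P}\lesssim h^{r+\frac12}|u|_{r+1,\infty,\cal P}$, to obtain a contribution $\lesssim h^{2r+1}\sum_{j=r+1}^{2r+1}|u|_{j,\infty,\cal P}$. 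This yields $|\tilde E_1|\lesssim h^{2r+1}\sum_{j=r+1}^{2r+1}|u|_{j,\infty,\cal P}$. For the remainder $C_0\tilde E_2$ I would show it is of higher order: the same near/far splitting gives $|\tilde E_2|\lesssim h^{2r+1}$ (now using $\sum_k h_k^{2r+1}\lesssim h^{2r}$), while averaging \eqref{est4} against the Gauss weights yields $C_0=(b-a)^{-1}\sum_{k,j}A_{k,j}(\alpha e'-\e)(g_{k,j})=(b-a)^{-1}\big(\int_a^b(\alpha e'-\e)-\tilde E_0\big)$, whence $|C_0|\lesssim|e|_1\lesssim h^{r}|u|_{r+1,\cal P}$ by \eqref{optimalh1}. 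Hence $|C_0\tilde E_2|\lesssim h^{3r+1}|u|_{r+1,\cal P}$ is absorbed, and combining the two estimates proves \eqref{supconv_vectices1}.

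I expect the main obstacle to be the bookkeeping in the estimate of $\tilde E_1$: one must carefully separate the near-diagonal subintervals, where the cancellation in $H$ is lost and the previous theorem's per-element bound has to be reused, from the far ones, where the mean value theorem in the first argument supplies the decisive extra factor $h_i$, and one must justify the uniform boundedness of the mixed Green-function derivatives off the diagonal. The estimates of $C_0$ and $\tilde E_2$ are then routine.
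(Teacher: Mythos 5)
Your proposal is correct and follows the paper's strategy in its skeleton: represent $e(x_i)-e(x_{i-1})$ through the difference kernel $H(y)=\frac{\partial G}{\partial y}(x_i,y)-\frac{\partial G}{\partial y}(x_{i-1},y)$, apply the $r$-point Gauss rule elementwise, collapse the quadrature sum to a constant multiple of $\sum_{k,j}A_{k,j}H(g_{k,j})$ using \eqref{est4} together with $\int_a^b H\,dy=0$, and gain the extra power of $h$ from the mean value theorem in the first argument of $G$. Where you genuinely depart from the paper is in the treatment of the near-diagonal elements, and your version is the more careful one. The paper asserts the bound $\|H\|_{j,\infty,\tau_i}\lesssim h\|G\|_{j+1,\infty,\tau_i}$ on $\tau_i$ itself; but for $y\in\tau_i$ the two terms of $H$ are evaluated on opposite sides of the diagonal of $G$, across which $\partial G/\partial y$ has an $O(1)$ jump, so no factor $h$ is available there. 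For instance, for $-u''=f$ on $(0,1)$ one has $G(x,y)=\min(x,y)\bigl(1-\max(x,y)\bigr)$, and then $H\equiv 1-h_i$ on $\tau_i$, which is not $O(h)$. Your near/far splitting sidesteps this: on the $O(1)$ elements touching $[x_{i-1},x_i]$ you claim no cancellation at all and simply reuse the per-element bound $h_k^{2r+1}\cdot O(1)$ from Theorem \ref{supconv_vectices}, which suffices precisely because there are only boundedly many such elements, while the $O(h)$ gain is invoked only away from the diagonal, where the mean value theorem legitimately applies. A second, smaller difference: you retain the factor $C_0$ multiplying $\tilde E_2$ and bound it by $O(h^r)$ via \eqref{optimalh1}, whereas the paper silently omits this factor from $E'_{2,k}$; your bookkeeping closes that (harmless but real) gap too. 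So the proposal not only reaches the same $h^{2r+1}$ rate by essentially the same route, it also repairs the two weak points in the paper's own proof.
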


\begin{proof}
 By the same arguments as Theorem \ref{supconv_vectices}, we obtain
\[
        e(x_i)-e(x_{i-1})=\sum_{k=1}^{N}(E'_{1,k}+E'_{2,k}),
\]
where
\begin{eqnarray*}
E'_{1,k}&=&\left. \frac{h_k^{2r+1}(r!)^4}{(2r+1)[(2r)!]^3}\left[\big( (\alpha(y)e'(y)-\e(y)\big)\left( \frac{\partial G}{\partial y}(x_i,y) -\frac{\partial G}{\partial y}(x_{i-1},y) \right)\right]^{(2r)}\right|_{y=\xi_k},\\
E'_{2,k}&=&\left.-\frac{h_k^{2r+1}(r!)^4}{(2r+1)[(2r)!]^3}\left( \frac{\partial G}{\partial y}(x_i,y) -\frac{\partial G}{\partial y}(x_{i-1},y) \right)^{(2r)}\right|_{y=\eta_k}
\end{eqnarray*}
with  $\xi_k,\eta_k \in \tau_k$.\\
Recall the construction of the Green function $G(x_i,\cdot)$, for all $j\in \bN_{2r}$,
\[
   \| \frac{\partial G}{\partial y}(x_i,y) -\frac{\partial G}{\partial y}(x_{i-1},y)\|_{j,\infty,\Omega\setminus\tau_{i}}\lesssim h\|G\|_{j+1,\infty,\Omega\setminus\tau_{i}},
\]
 and
\[
   \| \frac{\partial G}{\partial y}(x_i,y) -\frac{\partial G}{\partial y}(x_{i-1},y)\|_{j,\infty,\tau_{i}}\lesssim h\|G\|_{j+1,\infty,\tau_{i}}.
\]
 Since the Green function $G(x_i,\cdot)\in C^{2r}(\tau_k), k\in \bZ_{N}$ is bounded, then
\begin{equation*}
 \begin{split}
   &\left.\left[\big( (\alpha(y) e'(y)-\e(u)\big)\left( \frac{\partial G}{\partial y}(x_i,y) -\frac{\partial G}{\partial y}(x_{i-1},y) \right)\right]^{(2r)}\right|_{y=\xi_k}\\
   &\lesssim \sum_{j=0}^{2r}\binom{2r}{j}\|\alpha e'-\e\|_{j,\infty,\tau_k}\| \frac{\partial G}{\partial y}(x_i,y) -\frac{\partial G}{\partial y}(x_{i-1},y)\|_{2r-j,\infty,\tau_{k}}\\
   &\lesssim  h_k\left( \sum_{j=0}^{r}|e|_{j,\infty,\tau_k}+\sum_{j=r+1}^{2r+1}|u|_{j,\infty,\tau_k}\right).
 \end{split}
\end{equation*}
  Following the same estimate for $\sum_{j=0}^{r}|e|_{j,\infty,\tau_k}+\sum_{j=r+1}^{2r+1}|u|_{j,\infty,\tau_k}$ as Theorem \ref{supconv_vectices}, we obtain
\[
  | E'_{1,k}|\lesssim h_k^{2r+2},\ \ |E'_{2,k}|\lesssim h_k^{2r+2},\ \ \forall k\in \bZ_{N},
\]
 which yields the inequality \eqref{supconv_vectices1} directly.
\end{proof}

\bigskip

Next we present the superconvergence property of $u_{\cal P}'$ at Gauss points, and $u_{\cal P}$ at Lobatto points.
Before our analysis, we first introduce a special polynomial.  For all $v(t)\in H^1([-1,1])$, we denote by
\[
     v_r(t)=\sum_{j=0}^rb_jM_j(t)
\]
  the $r$th approximation of $v(t)$ with
\begin{eqnarray*}
    && b_0=\frac{v(1)+v(-1)}{2},\ b_1=\frac{v(1)-v(-1)}{2},\\
    && b_j=(j-\frac 12)\int_{-1}^{1}v'(t)L_{j-1}(t) dt,\ \ j=2,\ldots,r.
\end{eqnarray*}
  where $M_i$ is the Lobatto polynomial of degree $i$ and $L_{j}$ is the Legendre polynomial of degree $j$.
  For all $x\in\tau_i, i\in\bZ_N$, we denote by
\[
   v_r(x)=v_r(\frac{x_i+x_{i-1}+h_it}{2}),\ \ t\in[-1,1]
\]
   the $r$th approximation of $v(x)$ on the interval $\tau_i, i\in\bZ_N$. Then
\begin{equation}\label{l1}
  |(v-v_r)(l_{i,j})|\lesssim h^{r+2}\|v\|_{r+2,\infty,\cal P},\ \ j\in\bZ_{r-1},
\end{equation}
  and
\begin{equation}\label{g2}
  |(v-v_r)'(g_{i,j})|\lesssim h^{r+1}\|v\|_{r+2,\infty,\cal P},\ \ j\in\bZ_r.
\end{equation}

\begin{theorem}\label{supconv_gauss}
     Let $u\in W^{r+2,\infty}_{\cal P}(\Omega)$ be the solution of  \eqref{Poisson}, and $u_{\cal P}$ the solution of FVM scheme \eqref{bilinear}.
     Then
    \begin{eqnarray}\label{e_gauss2}
        |(u-u_{\cal P})'(g_{i,j})|\lesssim h^{r+1}\|u\|_{r+2,\infty,\cal P},\ \  (i,j)\in \bZ_N \times \bZ_{r-1},
    \end{eqnarray}
    and
    \begin{eqnarray}\label{e_Lobatto2}
        |(u-u_{\cal P})(l_{i,j})|\lesssim h^{r+2}\|u\|_{r+2,\infty,\cal P},\ \ (i,j)\in \bZ_N \times \bZ_r.
    \end{eqnarray}
\end{theorem}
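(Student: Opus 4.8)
The plan is to split the error through the special polynomial $u_r$ of \eqref{l1}--\eqref{g2}, writing $e:=u-u_{\cal P}=\rho+\phi$ with $\rho=u-u_r$ and $\phi=u_r-u_{\cal P}\in U^r_{\cal P}$. The part $\rho$ is superconvergent by construction: \eqref{g2} gives $|\rho'(g_{i,j})|\lesssim h^{r+1}\|u\|_{r+2,\infty,\cal P}$ at every Gauss point and \eqref{l1} gives $|\rho(l_{i,j})|\lesssim h^{r+2}\|u\|_{r+2,\infty,\cal P}$ at every interior Lobatto point, so both \eqref{e_gauss2} and \eqref{e_Lobatto2} reduce to the same bounds for the discrete function $\phi$. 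First I would record that $\phi$ is small in $H^1$: since $u_r$ and $u_I$ both interpolate $u$ at the nodes and differ by $\lesssim h^{r+2}$ at the interior Lobatto points (by \eqref{l1}), a scaling argument gives $\|u_r-u_I\|_1\lesssim h^{r+1}$, and combining with \eqref{h1est} yields $\|\phi\|_1\lesssim h^{r+1}$. The inverse inequality then gives the (non-sharp) bounds $\|\phi'\|_{0,\infty,\tau_i}\lesssim h^{r+1/2}$ and $\|e'\|_{0,\infty,\cal P}\lesssim h^{r}$.

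The heart of the argument is to remove this spurious half power at the Gauss points, and for this I would exploit the conservation identity \eqref{est4}, i.e.\ $\alpha(g_{i,j})e'(g_{i,j})-\e(g_{i,j})=C_0$ with one global constant $C_0$. Taking the difference of \eqref{est4} between two Gauss points of the same element $\tau_i$ eliminates $C_0$; the resulting $\e$-difference $\int_{g_{i,j}}^{g_{i,k}}(\beta e'+\gamma e)$ is an integral over a subinterval of length $\lesssim h$ of an integrand of size $\lesssim h^{r}$, while the $\rho'$ terms are $\lesssim h^{r+1}$ by \eqref{g2}. Hence the $r$ quantities $\alpha(g_{i,j})\phi'(g_{i,j})$, $j\in\bZ_r$, agree with one another up to $h^{r+1}\|u\|_{r+2,\infty,\cal P}$; that is, $\phi'$ takes an essentially constant value on the Gauss nodes of each element.

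To pin down this value I would use that $\phi'|_{\tau_i}\in\bP_{r-1}$, so the $r$-point Gauss rule is exact for it: $\sum_{j=1}^r A_{i,j}\phi'(g_{i,j})=\int_{\tau_i}\phi'=\phi(x_i)-\phi(x_{i-1})=e(x_i)-e(x_{i-1})$, the last step using that $u_r$ interpolates $u$ at the nodes. The nodal jump superconvergence \eqref{supconv_vectices1} bounds the right-hand side by $h^{2r+1}\lesssim h^{r+2}$; since $\sum_j A_{i,j}/\alpha(g_{i,j})\sim h$ and the $\alpha(g_{i,j})\phi'(g_{i,j})$ coincide up to $h^{r+1}$, dividing out gives $|\phi'(g_{i,j})|\lesssim h^{r+1}\|u\|_{r+2,\infty,\cal P}$ at every Gauss point, and adding the $\rho'$ bound proves \eqref{e_gauss2}. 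For \eqref{e_Lobatto2} I would then note that $\phi'\in\bP_{r-1}$ is the Lagrange interpolant of its uniformly $\lesssim h^{r+1}$ Gauss values, whence $\|\phi'\|_{0,\infty,\tau_i}\lesssim h^{r+1}$; integrating from a node, $\phi(l_{i,j})=e(x_{i-1})+\int_{x_{i-1}}^{l_{i,j}}\phi'$ is bounded by $h^{2r}+h\cdot h^{r+1}\lesssim h^{r+2}$ via the nodal estimate \eqref{e_vertice1}, the node $l_{i,r}=x_i$ being handled directly by \eqref{e_vertice1}.

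The main obstacle is precisely this removal of the half power: the global estimate $\|\phi\|_1\lesssim h^{r+1}$ together with the inverse inequality is by itself too weak, and the sharp pointwise rate emerges only from the interplay of three facts — the rigidity of \eqref{est4} (which forces $\alpha\phi'$ to be locally constant at the Gauss nodes), the exactness of Gauss quadrature on $\bP_{r-1}$, and the nodal/jump superconvergence \eqref{supconv_vectices1} and \eqref{e_vertice1}. A secondary point to monitor is regularity: \eqref{supconv_vectices1} is stated under $u\in W^{2r+1,\infty}_{\cal P}(\Omega)$, whereas here only the weaker conclusion $|e(x_i)-e(x_{i-1})|\lesssim h^{r+2}$ is actually needed, so one should verify that this weaker nodal bound can be extracted under the present hypothesis $u\in W^{r+2,\infty}_{\cal P}(\Omega)$.
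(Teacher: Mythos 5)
Your route is genuinely different from the paper's. The paper proves Theorem \ref{supconv_gauss} by a duality argument: it writes $v(x)=A(v,G(x,\cdot))$ with the Green function, introduces the Galerkin approximation $G_{\cal P}$ of $G(x,\cdot)$, and uses the conservation identity \eqref{est4} plus the Gauss-quadrature error formula to show $A(u-u_{\cal P},G_{\cal P})\lesssim h^{r+2}\|u\|_{r+2,\infty,\cal P}$, obtaining the pointwise bound $|(u_r-u_{\cal P})(x)|\lesssim h^{r+2}$ on all of $\Omega$ in one stroke; \eqref{e_Lobatto2} then follows from \eqref{l1}, and \eqref{e_gauss2} from an inverse inequality and \eqref{g2}. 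You bypass the Green function for the Gauss-point estimate and argue locally: differencing \eqref{est4} inside one element makes $\alpha\phi'$ essentially constant at the Gauss nodes, exactness of the $r$-point Gauss rule on $\bP_{r-1}$ ties $\sum_j A_{i,j}\phi'(g_{i,j})$ to the nodal jump $e(x_i)-e(x_{i-1})$, and Theorem \ref{supconv_vectrices2} kills that jump. This is precisely the mechanism the paper itself uses for the special cases $\beta=0$ and $\beta=\gamma=0$ (Theorems \ref{supconv_gauss1} and \ref{Theo_5}); your observation that the $\int(\beta e'+\gamma e)$ terms fit inside the $O(h^{r+1})$ budget even when $\beta\neq 0$ is correct, and applying exactness to the polynomial $\phi'$ (rather than quadrature-with-error to $e'$, as the paper does in those theorems) is a clean touch. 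What the duality argument buys is the Lobatto estimate for every $r$ directly; what your argument buys is elementary locality and no discrete Green function.

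There are, however, two genuine gaps. First, the dependence structure: both halves of your proof funnel through Theorems \ref{supconv_vectices} and \ref{supconv_vectrices2}, which are proved under $u\in W^{2r+1,\infty}_{\cal P}(\Omega)$ and whose bounds involve $\sum_{j=r+1}^{2r+1}|u|_{j,\infty,\cal P}$; neither the hypothesis $u\in W^{r+2,\infty}_{\cal P}(\Omega)$ nor the stated right-hand side $\|u\|_{r+2,\infty,\cal P}$ controls these quantities. You flag this as ``a secondary point to monitor,'' but it is not secondary: to run your argument under the stated hypothesis you must reprove the jump bound $|e(x_i)-e(x_{i-1})|\lesssim h^{r+2}\|u\|_{r+2,\infty,\cal P}$ using a lower-order Peano-kernel representation of the Gauss quadrature error in place of the $(2r)$-th derivative formula -- real work, not a verification. (In fairness, the paper's own proof is loose in the same way: its term $E_3$ is bounded via $\|e\|_{2r-j+2,\infty,\tau_k}$ up to $\|e\|_{2r+1,\infty,\tau_k}$.) Second, the case $r=1$ of \eqref{e_Lobatto2}: there the points $l_{i,j}$, $j\in\bZ_r$, are the mesh nodes, and your bound $|\phi(l_{i,j})|\lesssim h^{2r}+h\cdot h^{r+1}$ delivers only $h^{2}$, short of the claimed $h^{r+2}=h^{3}$; your argument is fine for $r\ge 2$, where $h^{2r}\le h^{r+2}$, but the edge case needs the paper's pointwise estimate or a separate treatment. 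A smaller caveat: extracting $|c_i|\lesssim h^{r+1}$ by dividing $c_i\sum_j A_{i,j}/\alpha(g_{i,j})\sim c_i h_i$ against the \emph{global} bound $h^{2r+1}$ implicitly uses $h_i\sim h$, i.e.\ quasi-uniformity -- though the paper commits the same sin in Theorems \ref{supconv_gauss1} and \ref{Theo_5}.
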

\begin{proof}For all $v\in H_0^1(\Omega)$, let
\[
   A(u,v)=(\alpha u',v')+(\beta u'+\gamma u,v).
\]
  Then we have
\[
   v(x)=A(v,G(x,\cdot)),\ \ \forall x\in \Omega,
 \]
  where $G(x,\cdot)$ is the Green function for the problem \eqref{Poisson}.
  Let $G_{\cal P}$ be the Garlerkin approximation of $G(x,\cdot)$, that is
\[
   v_{\cal P}(x)=A(v_{\cal P},G_{\cal P}),\ \ \forall v_{\cal P}\in U_{\cal P}^r, \forall x\in\Omega.
\]
  Then(see\cite{Chen.C.M2001}(p33))
\[
  A(u-u_r,G_{\cal P})\lesssim h^{r+2}\|u\|_{r+2,\infty,\cal P}.
\]
  We next estimate the term $A(u-u_{\cal P},G_{\cal P})$. Note that $G_{\cal P}\in U_{\cal P}^r$, then
\begin{eqnarray*}
A(u-u_{\cal P},G_{\cal P})&=&\int_a^b \big(\alpha(y)e'(y)-\e(y)\big)G'_{\cal P}(y) dy\\
      &=&\sum_{k=1}^{N}\sum_{j=1}^{r} A_{k,j}\big(\alpha(g_{k,j})e'(g_{k,j})-\e(g_{k,j})\big)G_{\cal P}(g_{k,j})+E_3\\
      &=&C_0\int_a^b G'_{\cal P} dy +E_3=E_3,
\end{eqnarray*}
  where $e(y),\e(y)$ and $C_0$ are the same as in Theorem \ref{supconv_vectices} and
\[
  E_3=\left.\sum_{k=1}^N \frac{h_k^{2r+1}(r!)^4}{(2r+1)[(2r)!]^3}\left[\big(\alpha(y) e'(y)-\e(y)\big)G'_{\cal P}(y)\right]^{(2r)}\right |_{y=\xi_k}.
\]
% By the same argument as in Theorem \ref{supconv_vectices}, we have
%\[
%   \|e\|_{2r+1,\infty,\tau_k}\lesssim \sum_{j=r+1}^{2r+1}|u|_{j,\infty,\cal P}.
%\]
  Then
\begin{eqnarray*}
   E_3 &\lesssim&\sum_{k=1}^N h_k^{2r+1}\left(|G_{\cal P}|_{r,\infty,\tau_k}\|e\|_{r+2,\infty,\tau_k}+\sum_{j=1}^{r-1}|G_{\cal P}|_{j,\infty,\tau_k}\|e\|_{2r-j+2,\infty,\tau_k}\right)\\
       &\lesssim&\sum_{k=1}^N |G_{\cal P}|_{2,1,\tau_k}\left(h_k^{r+2}\|e\|_{r+2,\infty,\tau_k}+\sum_{j=1}^{r-1}h_k^{2r+2-j}\|e\|_{2r-j+2,\infty,\tau_k}\right)\\
       &\lesssim& h^{r+2}\|e\|_{r+2,\infty,\cal P}\lesssim h^{r+2}\|u\|_{r+2,\infty,\cal P}.
\end{eqnarray*}
  Here we have used \eqref{e1_1} and the inverse inequality
\[
   |G_{\cal P}|_{j,\infty,\tau_k}\lesssim h_k^{1-j}|G_{\cal P}|_{2,1,\tau_k}, \ \forall j\in\bZ_{r}
\]
 and the fact \cite{Chen.C.M2001}(p33)
\[
   |G_{\cal P}|_{2,1,\cal P}=\sum_{i=1}^{N}|G_{\cal P}|_{2,1,\tau_k}\le C
\]
 with $C$ a bounded constance.
 Note that
\[
   (u_r-u_{\cal P})(x)=A(u_r-u_{\cal P},G_{\cal P})=A(u-u_r,G_{\cal P})+A(u-u_{\cal P},G_{\cal P}),
\]
 then we have
\[
   (u_r-u_{\cal P})(x)\lesssim h^{r+2}\|u\|_{r+2,\infty,\cal P}.
\]
 By inverse inequality,
\[
   (u_r-u_{\cal P})'(x)\lesssim h^{r+1}\|u\|_{r+2,\infty,\cal P}.
\]
  Combing above estimates with \eqref{l1} and \eqref{g2}, we obtain
  \eqref{e_gauss2} and \eqref{e_Lobatto2} directly by the triangular inequality.

%\begin{eqnarray*}
%   &&|(u-u_{\cal P})(l_{i,j}|\le|(u-u_{r})(l_{i,j})|+|(u_r-u_{\cal P})(l_{i,j})|\lesssim h^{r+2}\|u\|_{r+2,\infty,\cal P}\\
%   &&|(u-u_{\cal P})'(g_{i,j}|\le|(u-u_{r})'(g_{i,j})|+|(u_r-u_{\cal P})'(g_{i,j})|\lesssim h^{r+1}\|u\|_{r+2,\infty,\cal P}.
%\end{eqnarray*}
%  The proof is completed.

\end{proof}

As a direct consequence , we have
 \begin{equation}\label{sup_guass}
 |u-u_{\cal P}|_{G,1} \lesssim h^{r+1}, \quad |u-u_{\cal P}|_{aver,1} \lesssim h^{r+1}
\end{equation}
 and
\begin{equation}\label{sup}
 |u-u_{\cal P}|_{L,0} \lesssim h^{r+2}, \quad |u-u_{\cal P}|_{aver,0} \lesssim h^{r+2},
\end{equation}
where
\[
    |v|_{{\rm aver},1} = \left(\frac{1}{Nr}\sum_{i=1}^N \sum_{j=1}^r v'(g_{i,j})^2\right)^\frac12
\]
 and
\begin{equation*}\label{semi-norm}
 |v|_{L,0} = \left(\sum_{i=1}^N \sum_{j=0}^r w_{i,j} v(l_{i,j})^2\right)^\frac12,\quad  |v|_{{\rm aver},0} = \left(\frac{1}{Nr}\sum_{i=1}^N \sum_{j=0}^r v(l_{i,j})^2\right)^\frac12,
\end{equation*}
 here $w_{i,j}$ are weights of the Lobatto quadrature.
\bigskip

Now we consider a special case that $\beta=0$, we have the following theorem.
\begin{theorem}\label{supconv_gauss1}
     Let $u$ be the solution of  \eqref{Poisson}, and $u_{\cal P}$ the solution of FVM scheme \eqref{bilinear}. If $\beta(x)=0,\forall x\in \O, u\in W^{r+2,\infty}_{\cal P}(\Omega)$, then
     \begin{eqnarray}\label{e_gauss8}
        |(u-u_{\cal P})'(g_{i,j})|\lesssim h^{\min\{r+2,2r\}}\sum_{k=r+1}^{2r+1}|u|_{k,\infty,\cal P}.
    \end{eqnarray}
\end{theorem}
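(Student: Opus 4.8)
The plan is to abandon the Green's function duality used in Theorem \ref{supconv_gauss} (which, after an inverse inequality, only delivers the rate $h^{r+1}$ for the derivative) and instead exploit the finite volume conservation relation directly. Writing $e=u-u_{\cal P}$ and $\e(x)=\int_a^x\gamma(y)e(y)\,dy$ (here $\beta\equiv0$ is used crucially), the identity \eqref{est4} from the proof of Theorem \ref{supconv_vectices} states that $\alpha(g_{i,j})e'(g_{i,j})-\e(g_{i,j})=C_0$ at \emph{every} Gauss point, with $C_0$ independent of $(i,j)$. Since $\alpha\ge\alpha_0>0$, this gives $e'(g_{i,j})=\alpha(g_{i,j})^{-1}\bigl(C_0+\e(g_{i,j})\bigr)$, so \eqref{e_gauss8} follows once I bound both $|\e(g_{i,j})|$ and $|C_0|$ by $h^{\min\{r+2,2r\}}\sum_{k=r+1}^{2r+1}|u|_{k,\infty,\cal P}$. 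I note as a guiding sanity check that the right-hand side involves only the high-order seminorms $|u|_{k,\infty,\cal P}$ with $k\ge r+1$: this is forced, because the scheme is exact on $U_{\cal P}^r$ (by \eqref{est4}-type exactness and the inf-sup uniqueness, $e=0$ whenever $u\in U_{\cal P}^r$), so every quantity in the argument must vanish for $u\in\bP_r$.

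To bound $\e(g_{i,j})$, I split $\int_a^{g_{i,j}}\gamma e$ into integrals over the complete elements $\tau_k\subset[a,g_{i,j}]$ plus the partial piece on $[x_{i-1},g_{i,j}]$; using $\|e\|_{0,\infty,\cal P}\lesssim h^{r+1}\sum_{k=r+1}^{2r+1}|u|_{k,\infty,\cal P}$ (which comes from the decomposition $e=(u-u_r)+(u_r-u_{\cal P})$ in the proof of Theorem \ref{supconv_gauss}), the partial piece is $O(h^{r+2})$. On a full element I again write $e=(u-u_r)+(u_r-u_{\cal P})$. The part $\int_{\tau_k}\gamma(u_r-u_{\cal P})$ is controlled by $\|u_r-u_{\cal P}\|_{0,\infty,\cal P}\lesssim h^{r+2}$ from Theorem \ref{supconv_gauss}, contributing $O(h^{r+2})$ after summation. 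For $\int_{\tau_k}\gamma(u-u_r)$ I use that $u_r'$ is the degree $r-1$ Legendre projection of $u'$, so $(u-u_r)'$ is $L^2$-orthogonal to $\bP_{r-1}$ on each $\tau_k$; since $u-u_r$ also vanishes at the element endpoints, integration by parts shows $\int_{\tau_k}(u-u_r)=0$ for $r\ge2$ and $\int_{\tau_k}(u-u_r)=O(h_k^{2r+1})$ for $r=1$. Expanding $\gamma$ about the element midpoint and pairing powers of $t$ against the lowest surviving Legendre mode of $u-u_r$ then gives $\int_{\tau_k}\gamma(u-u_r)=O(h_k^{2r+1})$, hence $O(h^{2r})$ after summation. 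Collecting the three contributions yields $|\e(g_{i,j})|\lesssim h^{\min\{r+2,2r\}}\sum_{k=r+1}^{2r+1}|u|_{k,\infty,\cal P}$.

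To bound $C_0$, I average \eqref{est4} against the Gauss weights. Since $\sum_{k,j}A_{k,j}=b-a$, one has $C_0(b-a)=\sum_{k,j}A_{k,j}\alpha(g_{k,j})e'(g_{k,j})-\sum_{k,j}A_{k,j}\e(g_{k,j})$; the second sum is $O(h^{\min\{r+2,2r\}})$ by the previous paragraph. For the first sum I write $\sum_{k,j}A_{k,j}(\alpha e')(g_{k,j})=\int_a^b\alpha e'-\sum_k\mathcal{E}_k$, where $\mathcal{E}_k$ is the Gauss quadrature error of $\alpha e'$ on $\tau_k$. Integration by parts with $e(a)=e(b)=0$ gives $\int_a^b\alpha e'=-\int_a^b\alpha'e$, which is $O(h^{\min\{r+2,2r\}})$ by exactly the argument of the previous paragraph with $\alpha'$ in place of $\gamma$. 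Each $\mathcal{E}_k$ is estimated by the quadrature error formula together with the Leibniz rule: the terms of $(\alpha e')^{(2r)}$ carrying at most $r$ derivatives on $e'$ involve only $u^{(r+1)},\dots,u^{(2r+1)}$ (since $u_{\cal P}^{(m)}=0$ for $m>r$) and produce precisely the factor $\sum_{k=r+1}^{2r+1}|u|_{k,\infty,\cal P}$, whereas the remaining terms are absorbed by the inverse inequality and the bound on $\|e\|_{0,\infty,\cal P}$; altogether $\sum_k\mathcal{E}_k=O(h^{2r}\sum_{k=r+1}^{2r+1}|u|_{k,\infty,\cal P})$. This gives $|C_0|\lesssim h^{\min\{r+2,2r\}}\sum_{k=r+1}^{2r+1}|u|_{k,\infty,\cal P}$, and combining with the estimate for $\e(g_{i,j})$ proves \eqref{e_gauss8}.

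I expect the main obstacle to be the cancellation analysis of $\int_{\tau_k}\gamma(u-u_r)$: one must read off from the Legendre expansion of $(u-u_r)'$ that the leading element contribution scales like $h_k^{2r+1}$, and in particular separate the case $r=1$ (where $\int_{\tau_k}(u-u_r)\ne0$ and the $h^{2r}$ rate is genuinely active) from $r\ge2$ (where the midpoint moment vanishes), so that the two competing rates $h^{r+2}$ and $h^{2r}$ appear with the correct minimum. The quadrature bookkeeping for $C_0$ is more routine, but it must be organized so that only the seminorms $|u|_{k,\infty,\cal P}$ with $r+1\le k\le2r+1$ survive on the right-hand side, consistent with the exactness of the scheme on $U_{\cal P}^r$.
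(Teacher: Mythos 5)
Your proposal is correct in outline, but it takes a genuinely different route from the paper's proof. The paper stays local to a single element: it combines the within-element relation \eqref{est2} with the Gauss quadrature identity $e(x_i)-e(x_{i-1})=\sum_{j}A_{i,j}e'(g_{i,j})+E_i$ and, crucially, with the nodal-difference superconvergence of Theorem \ref{supconv_vectrices2}, so that $e'(g_{i,1})$ (hence every $e'(g_{i,j})$ in that element) is bounded by $h^{2r}$ plus $\int_{\tau_i}|\gamma e|\,dx$; this last integral only runs over one element, so the crude bound $h\,\|e\|_{0,\infty,\cal P}\lesssim h^{r+2}$ suffices and no cancellation is needed. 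You instead work globally: you bound the global constant $C_0$ of \eqref{est4} and the accumulated integral $\e(g_{i,j})=\int_a^{g_{i,j}}\gamma e\,dy$, which spans $O(1/h)$ elements, so you are forced to extract per-element cancellation, namely $\int_{\tau_k}\gamma(u-u_r)=O(h_k^{2r+1})$ via the $L^2$-orthogonality of $(u-u_r)'$ to $\bP_{r-1}$, and to import $\|u_r-u_{\cal P}\|_{0,\infty,\cal P}\lesssim h^{r+2}$ from the proof of Theorem \ref{supconv_gauss}; $C_0$ is then controlled by Gauss-weight averaging, integration by parts against $\alpha'$, and elementwise quadrature errors. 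What the paper's route buys is brevity and lighter machinery at this stage: once Theorem \ref{supconv_vectrices2} is available, nothing about $u_r$ or Legendre orthogonality is needed. What your route buys is independence from Theorem \ref{supconv_vectrices2} (and its Green-function differencing argument), at the cost of leaning on Theorem \ref{supconv_gauss} and more delicate orthogonality bookkeeping; it also makes transparent where the two competing rates arise (the $h^{2r}$ from cancellation and quadrature errors, the $h^{r+2}$ from the $u_r-u_{\cal P}$ sup-norm bound and the partial-element piece).

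Two small repairs are needed in the write-up. In the Leibniz split of $(\alpha e')^{(2r)}$, the terms involving only $u^{(r+1)},\dots,u^{(2r+1)}$ are those carrying \emph{at least} $r$ derivatives on $e'$ (so that $(e')^{(m)}=u^{(m+1)}$ because $(u_{\cal P}')^{(m)}=0$ for $m\ge r$), not ``at most $r$''; the remaining terms are the ones absorbed by the inverse inequality and $\|e\|_{0,\infty,\cal P}\lesssim h^{r+1}$. Also, the bound you cite from Theorem \ref{supconv_gauss} carries the full norm $\|u\|_{r+2,\infty,\cal P}$, so as written your final estimate controls the error by this norm rather than by $\sum_{k=r+1}^{2r+1}|u|_{k,\infty,\cal P}$ alone; your polynomial-exactness remark is the correct mechanism for removing the low-order part (subtract an element of $U^r_{\cal P}$, e.g.\ $u_I$, and use linearity of $u\mapsto u_{\cal P}$), but it is left as a heuristic. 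Since the paper's own proof is loose on exactly the same point, this is cosmetic rather than a genuine gap.
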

\begin{proof}
First, both $u$ and $u_{\cal P}$  satisfy \eqref{conserve}, there holds for all $ (i,j)\in \bZ_N\times \bZ_{r-1}$ that
\begin{eqnarray*}
    \alpha(g_{i,j+1})e'(g_{i,j+1})-\alpha(g_{i,j})e'(g_{i,j})=\int_{g_{i,j}}^{g_{i,j+1}} \gamma(x)e(x)dx,
\end{eqnarray*}
     which yields
 \begin{equation}\label{est2}
     e'(g_{i,j+1})=\frac{\alpha(g_{i,1})}{\alpha(g_{i,j+1})}e'(g_{i,1})+\frac{1}{\alpha(g_{i,j+1})}\int_{g_{i,1}}^{g_{i,j+1}} \gamma(x)e(x)dx.
 \end{equation}
 On the other hand,
 \[
       e(x_i)-e(x_{i-1})=\int_{x_{i-1}}^{x_i} e'(y)dy=  \sum_{j=1}^r A_{i,j}e'(g_{i,j})+E_i,
\]
where by \cite{DavisRabinowitz1984}(p98, (2.7.12)),
\begin{eqnarray*}
    E_i=\frac{h_i^{2r+1}(r!)^4}{(2r+1)[(2r)!]^3}(e')^{(2r)}(\xi_i)
    \lesssim h^{2r+1}|u|_{2r+1,\infty,\tau_i},\quad \xi_i \in \tau_i.
\end{eqnarray*}
   By Theorem \eqref{supconv_vectrices2} and \eqref{est2}, we obtain
\[
   h_ie'(g_{i,1})+h_i\int_{x_{i-1}}^{x_i}\gamma(x)e(x) dx\lesssim h_i^{2r+1}\sum_{k=r+1}^{2r+1}|u|_{k,\infty,\cal P}.
\]
   Then
\[
   e'(g_{i,j})\lesssim \int_{x_{i-1}}^{x_i}|\gamma(x)e(x)| dx+h_i^{2r}\sum_{k=r+1}^{2r+1}|u|_{k,\infty,\cal P},\ \ \forall j\in \bZ_{r}.
\]

 We next estimate the term $\int_{x_{i-1}}^{x_i}|\gamma(x)e(x)| dx$. Note that
 \begin{eqnarray*}
    \int_{g_{i,1}}^{g_{i,j}} \left|\gamma(x)e(x) \right| dx
    &\leq & \|\gamma\|_{\infty} \left( \int_{g_{i,1}}^{g_{i,j}} \left|(u-u_I)(x)\right| dx+\int_{g_{i,1}}^{g_{i,j}} \left|(u_I-u_{\cal P})(x) \right|dx \right)\\
    &\lesssim & h^{r+2}|u|_{r+2,\infty,\cal P} +h^{r+{5\over 2}}|u|_{r+2,\cal P} \lesssim h^{r+2}|u|_{r+2,\infty,\cal P},
\end{eqnarray*}
where in the above inequalities we have used $|u-u_I|\lesssim h^{r+1}\|u\|_{r+1,\infty}\lesssim h^{r+1}|u|_{r+2,\infty,\cal P}$ and
 $|u_I-u_{\cal P}|\lesssim h^{\frac 12}|u_I-u_{\cal P}|_{1,\cal P}\lesssim h^{r+\frac 32}|u|_{r+2,\cal P}$.

 Therefore,
\[
   |(u-u_{\cal P})'(g_{i,j})|\lesssim h^{\min\{r+2,2r\}}\sum_{k=r+1}^{2r+1}|u|_{k,\infty,\cal P}.
\]
The proof is completed.
 \end{proof}

%{\it Remark:} In our numerical experiments, we can show that, when $\beta(x)=0$, for $r=2$, we still have a (r+2)th order. It also can be check the case of r=1, the order is 2. Therefore, we should have the following result:
%      \begin{eqnarray}\label{e_gauss9}
%        |(u-u_{\cal P})'(g_{i,j})|\lesssim h^{min(r+2,2r)}|u|_{r+2,\infty,\cal P}.
%    \end{eqnarray}
\bigskip
In particular, when $\beta=\gamma=0$, we have a better result.

\begin{theorem}\label{Theo_5}Let $u\in W^{2r+1,\infty}_{\cal P}(\O)$ be the solution of \eqref{Poisson}, and
  $u_{\cal P}\in U^r_{\cal P}$ the solution of FVM scheme \eqref{bilinear}. If $\beta(x)=\gamma(x)=0,\forall x\in \O$, for all $(i,j)\in \bZ_N\times\bZ_r$, we have
 \begin{equation}\label{point_conv}
    |u'(g_{i,j})-u'_{\cal P}(g_{i,j})| \lesssim  h^{2r}\sum_{k=r+1}^{2r+1}|u|_{k,\infty,{\cal P}}.
 \end{equation}
%In particular, when $u\in U_{\cal P}^{2r}$, we have
%\begin{equation}\label{gauss}
%u'(g_{i,j})=u_{\cal P}'(g_{i,j}),\ \ \forall (i,j)\in \bZ_N \times \bZ_r.
%\end{equation}
\end{theorem}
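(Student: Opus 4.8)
The plan is to exploit the structural collapse caused by $\beta=\gamma=0$: the conservation law reduces to a pure flux balance, which forces $\alpha e'$ to be \emph{globally constant} across all Gauss points, and the entire theorem then reduces to pinning down that single constant. Set $e=u-u_{\cal P}$. Since $\beta=\gamma=0$, the auxiliary function $\e(x)=\int_a^x(\beta e'+\gamma e)\,dy$ of Theorem~\ref{supconv_vectices} vanishes identically, so \eqref{est4} becomes
\[
   \alpha(g_{i,j})e'(g_{i,j})=C_0,\qquad \forall (i,j)\in\bZ_N\times\bZ_r,
\]
with a single constant $C_0$ independent of $i,j$. Once this is established, \eqref{point_conv} is immediate: $|(u-u_{\cal P})'(g_{i,j})|=|C_0|/\alpha(g_{i,j})\le |C_0|/\alpha_0$, so the whole theorem reduces to the scalar estimate $|C_0|\lesssim h^{2r}\sum_{k=r+1}^{2r+1}|u|_{k,\infty,\cal P}$.

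To see that $\alpha e'$ is one constant, I would subtract \eqref{conserve} for $u$ and for $u_{\cal P}$ on each control volume $\tau'_{i,j}=[g_{i,j},g_{i,j+1}]$; with $\beta=\gamma=0$ this gives $\alpha(g_{i,j})e'(g_{i,j})=\alpha(g_{i,j+1})e'(g_{i,j+1})$ for every $(i,j)\in\bZ_N\times\bZ_{r_i}$. Because the control volumes chain all Gauss points together --- recall $g_{i,r+1}=g_{i+1,1}$, so $\tau'_{i,r}$ straddles the node $x_i$ and links the last Gauss point of $\tau_i$ to the first of $\tau_{i+1}$ --- these element-wise equalities propagate into the single global constant $C_0$ above.

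To bound $C_0$, I would apply the $r$-point Gauss rule on each $\tau_i$ to $\int_{x_{i-1}}^{x_i}e'$. Since $u_{\cal P}'|_{\tau_i}\in\bP_{r-1}$, we have $(e')^{(2r)}=u^{(2r+1)}$, so the error formula \cite{DavisRabinowitz1984}(p98, (2.7.12)) and $e'(g_{i,j})=C_0/\alpha(g_{i,j})$ yield
\[
   e(x_i)-e(x_{i-1})=\int_{x_{i-1}}^{x_i}e'\,dy=C_0\,S_i+E_i,\qquad S_i=\sum_{j=1}^r\frac{A_{i,j}}{\alpha(g_{i,j})},
\]
where $|E_i|\lesssim h_i^{2r+1}|u|_{2r+1,\infty,\tau_i}$. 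Summing over $i\in\bZ_N$ and telescoping, the left side collapses to $e(b)-e(a)=0$ because $e\in H^1_0(\Omega)$, so $C_0\sum_i S_i=-\sum_i E_i$. Now $\sum_i S_i\ge\|\alpha\|_{L^\infty(\Omega)}^{-1}\sum_{i,j}A_{i,j}=(b-a)/\|\alpha\|_{L^\infty(\Omega)}$ is bounded below, while $\sum_i|E_i|\lesssim h^{2r}|u|_{2r+1,\infty,\cal P}\sum_i h_i=(b-a)h^{2r}|u|_{2r+1,\infty,\cal P}$, which delivers the required bound on $C_0$ and hence \eqref{point_conv}.

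The main obstacle is rigorously justifying the first step --- that $\alpha e'$ is constant \emph{across} element boundaries, not merely within each element. This hinges on the control volume $\tau'_{i,r}$ straddling the node $x_i$, so that its single flux balance links the one-sided Gauss-point derivatives $e'(g_{i,r})$ and $e'(g_{i+1,1})$ taken from adjacent elements; here the vanishing of $\beta$ and $\gamma$ is essential, since any surviving zeroth-order contribution would break the exact flux balance and reintroduce a nonzero $\e$. As an alternative to the global telescoping, one could instead invoke Theorem~\ref{supconv_vectrices2} on the element of largest width, where $S_i\sim h$, to extract $|C_0|\lesssim h^{2r+1}/h=h^{2r}$; I prefer the self-contained telescoping argument above since it requires no prior nodal superconvergence result.
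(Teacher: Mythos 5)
Your proof is correct, and it takes a genuinely different route from the paper's at the decisive step. Both arguments begin the same way: with $\beta=\gamma=0$ the function $\e$ vanishes, and subtracting the conservation identity \eqref{conserve} for $u$ and $u_{\cal P}$ over the chained control volumes (the volume straddling each node $x_i$ being what links adjacent elements) gives exactly \eqref{est4}, i.e.\ $\alpha(g_{i,j})e'(g_{i,j})=C_0$ for a single global constant. The difference is in how $C_0$ is bounded. The paper works on one element: it writes $e(x_i)-e(x_{i-1})$ as the Gauss quadrature of $e'$ (exact for $u_{\cal P}'\in\bP_{r-1}$) plus an error of size $h_i^{2r+1}|u|_{2r+1,\infty,\tau_i}$, and then invokes the nodal-difference superconvergence of Theorem~\ref{supconv_vectrices2}, $|e(x_i)-e(x_{i-1})|\lesssim h^{2r+1}\sum_{k}|u|_{k,\infty,\cal P}$, to get $|C_0|\sum_{j}A_{i,j}\alpha^{-1}(g_{i,j})\lesssim h^{2r+1}$ and hence $|C_0|\lesssim h^{2r}$ on an element with $h_i\sim h$ --- precisely the alternative you sketch in your closing paragraph. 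You instead telescope the same per-element identity over all $i\in\bZ_N$, so that the left-hand side collapses to $e(b)-e(a)=0$ by the homogeneous boundary conditions, yielding $|C_0|\sum_i S_i\le\sum_i|E_i|$ with $\sum_i S_i\ge (b-a)/\|\alpha\|_{\infty,\Omega}$ bounded below independently of $h$ and $\sum_i|E_i|\lesssim h^{2r}|u|_{2r+1,\infty,\cal P}$. This buys two things: the proof becomes self-contained, needing only the quadrature error formula and the boundary conditions, and in particular it is independent of Theorem~\ref{supconv_vectrices2} and of the whole Green-function machinery of Theorems~\ref{supconv_vectices}--\ref{supconv_vectrices2} on which that result rests; and your bound is slightly sharper, involving only $|u|_{2r+1,\infty,\cal P}$ rather than the full sum $\sum_{k=r+1}^{2r+1}|u|_{k,\infty,\cal P}$ (which of course still implies \eqref{point_conv}). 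The paper's version, once Theorem~\ref{supconv_vectrices2} is granted, is local to a single element, but as a proof of this particular statement yours is the more elementary of the two.
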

\begin{proof}By \eqref{est4}, we denote the constant
\[
C=\alpha(g_{i,j})(u_{\cal P}'(g_{i,j})-u'(g_{i,j})).
\]
The fact $u_{\cal P}\in {\mathbb P}_{r}$ yields that
\begin{eqnarray*}\label{errnode}
e(x_i) &-& e(x_{i-1}) = \int_{x_{i-1}}^{x_i} e(t) dt \nonumber \\
&=&  \sum_{k=1}^r A_{i,k} e'(g_{i,k})  + \int_{x_{i-1}}^x u'(t) dt - \sum_{k=1}^r A_{i,k} u'(g_{i,k}).
\end{eqnarray*}
By \cite{DavisRabinowitz1984}(p98, (2.7.12))
\[
\left|\int_{x_{i-1}}^{x_i} u'(t) dt - \sum_{k=1}^r A_{i,k} u_{\cal P}'(g_{i,k})\right|
\lesssim h_i^{2r+1} |u|_{2r+1,\infty,\tau_i}
\]
and Theorem \ref{supconv_vectrices2}, we have
\[
    C\sum_{j=1}^r A_{i,j}\alpha^{-1}(g_{i,j}) \lesssim h_i^{2r+1}\sum_{k=r+1}^{2r+1}|u|_{k,\infty,\cal P},
\]
which yields \eqref{point_conv} directly.
\end{proof}

\begin{remark}
 We see that at the Gauss points, when $\beta=\gamma=0$, the derivative convergence rate $h^{2r}$ doubles the global optimal rate $h^r$,
 which is much better than the counterpart finite element method's $h^{r+1}$ rate, when $\beta=0$, the derivative convergence rate $h^{r+2}$ is one order higher than the counterpart finite element method's $h^{r+1}$;  and at the nodal points, the convergence rate $h^{2r}$ almost doubles the global optimal rate $h^{r+1}$ and equals to the counterpart finite element method's $h^{2r}$ rate; and at the  Lobatto points, the convergence rate $h^{r+2}$ is one order higher than the optimal global rate $h^{r+1}$,
 which is the same as the counterpart finite element method.
\end{remark}

\section{Post processing}
We observe from \eqref{e_gauss2}, \eqref{e_gauss8} and \eqref{point_conv} that $u'_{\cal P}$ approximates  the derivative of the exact solution $u$
pretty well at the Gauss points.  In this subsection, we will recover  $u'$  in the whole domain $\O$.

For all $i=1,\ldots, N-1$,  we construct a function $v_i\in \bP_{2r-1}([x_{i-1},x_{i+1}])$ by letting
\[
v_i(g_{l,k}) = u_{\cal P}'(g_{l,k}), \quad l=i,i+1; \;  k=1,2,\ldots,r.
\]
Then we define for all $x\in \tau_i= [x_{i-1},x_i], i=1,\ldots,N$,
\[
v(x)=\left\{
\begin{array}{lll}
v_1(x), &i=1,\\
\frac{1}{2}\big(v_i(x)+v_{i-1}(x)\big),  &2\le i\le N-1,\\
v_{N-1}(x),&i=N.
\end{array}
\right.
\]
To study the approximation property of $u$, we note that in each $[x_{i-1},x_{i+1}]$,
\[
u'(x)=(L_{2r-1}u')(x)+\frac{u^{(2r+1)}(\xi)}{(2r)!}\prod_{j=1}^r (x-g_{i,j})(x-g_{i+1,j}), \xi\in[x_{i-1},x_{i+1}]
\]
where the Lagrange interpolant
\[
(L_{2r-1}u' )(x)=\sum_{l=i}^{i+1}\sum_{j=1}^r u'(g_{l,j})w_{l,j}(x), w_{l,j}(x)=\prod_{l'\not=l,j'\not=j}\frac{x-g_{l',j'}}{g_{l,j}-g_{l',j'}}.
\]
Noting that
\[
v_{i}(x)=\sum_{l=i}^{i+1}\sum_{j=1}^r u'_{\cal P}(g_{l,j})w_{l,j}(x),
\]
we have
\begin{equation}\label{difference}
u'(x)-v_i(x)=\sum_{l=i}^{i+1}\sum_{j=1}^r (u'-u'_{\cal P})(g_{l,j})w_{l,j}(x)+\frac{u^{(2r+1)}(\xi)}{(2r)!}\prod_{j=1}^r (x-g_{i,j})(x-g_{i+1,j}).
\end{equation}
Since for all $l=i,i+1, j=1,\ldots, r$, we have
\[
|w_{l,j}(x)|\le c_r, \forall x\in [x_{i-1},x_{i+1}],
\]
where $c_r$ is a constant depends only on $r$, we obtain by \eqref{e_gauss2}, \eqref{e_gauss8} and \eqref{point_conv} that
\[
|u'(x)-v_i(x)|\lesssim h^m\sum_{k=r+1}^{2r+1}|u|_{k,\infty,{\cal P}},
\]
where $m=r+1$ for general elliptic equations, $m=r+2$ if $\beta=0$, and $m=2r$ if $\beta=\gamma=0$.
Consequently, we have
\[
|u'(x)-v(x)|\lesssim \lesssim h^m\sum_{k=r+1}^{2r+1}|u|_{k,\infty,{\cal P}}, \forall x\in\Omega.
\]
\section{Numerical experiments}
\setcounter{equation}{0}
In this section, we present numerical examples to demonstrate the method and to verify the theoretical results proved in this paper.

In our experiments, we solve the two-point boundary value problem \eqref{Poisson} by the FVM scheme \eqref{bilinear} with $r=4$ or $r=5$.
The underlying meshes are obtained by subdividing $\Omega=(0,1)$ to $N=2,4,8,16,32,64$ subintervals with equal sizes.

{\it Example} 1. We consider the two-point boundary value problem \eqref{Poisson} with
\[
%   \alpha(x)=e^{x},\ \ \beta(x)=x,\ \ \gamma(x)=1,\ \ \forall x\in\Omega,
        \alpha(x)=e^{x},\ \ \beta(x)=\cos x,\ \ \gamma(x)=x,\ \ \forall x\in\Omega,
\]
 and $f$ is chosen so that the exact solution of this problem is
\[
%     u(x)=x^{12}-x^{11}.
    u(x)=\sin x(x^{12}-x^{11}).
\]

We list approximate errors under various (semi-)norms in Table \ref{r=4} ( for the scheme $r=4$ ) and Table \ref{r=5} ( for the scheme $r=5$ ).
% Approximation errors for $u_{\cal P}$ are calculated in our numerical experiments under various norms,
% including $L^2(\|u-u_{\cal P}\|_0)$, $H^1(\|u-u_{\cal P}\|_1)$, $|u_I-u_{\cal P}|_1$, $E_{node}$, $|u-u_{\cal P}|_{aver,0}$, $|u-u_{\cal P}|_{L,0}$,
% $|u-u_{\cal P}|_{aver,1}$ and $|u-u_{\cal P}|_{G,1}$.

%
%In our numerical experiments,
% $\|u-u_{\cal P}\|_0$, $\|u-u_{\cal P}\|_1$ and $|u_I-u_{\cal P}|_1$ are all computed by twelve points Gauss quadrature.

\begin{table}[htbp]\caption{$r=4$ \label{r=4} }
\centering
\begin{tabular}{|c||c|c|c|c|c|c|}
\hline
N & $\|u-u_{\cal P}\|_0$ &  $\|u-u_{\cal P}\|_1$ & $|u_I-u_{\cal P}|_1$ & $|u-u_{\cal P}|_{L,0}$ \\
\hline
2 &1.8618e-03 & 5.1201e-02& 5.2554e-03& 3.3420e-04\\
4 & 1.4386e-04& 7.2801e-03& 3.1271e-04& 9.8931e-06\\
8 & 5.9282e-06& 5.9099e-04& 1.1758e-05& 1.8624e-07\\
16& 1.9882e-07& 3.9516e-05& 3.8485e-07& 3.0490e-09 \\
32& 6.3240e-09& 2.5119e-06& 1.2166e-08& 4.8197e-11\\
64& 1.9850e-10& 1.5766e-07& 3.8129e-10& 7.5536e-13\\
\hline
\hline
 N & $|u-u_{\cal P}|_{aver,0}$ &$|u-u_{\cal P}|_{G,1}$
&$|u-u_{\cal P}|_{aver,1}$  & $E_{node}$ \\
\hline
2 & 2.1895e-04& 8.0770e-04& 5.4962e-04& 1.1874e-05\\
4 & 6.2680e-06& 5.3025e-05& 3.5877e-05& 5.9186e-08\\
8 & 1.1716e-07& 1.9692e-06& 1.3338e-06& 2.3666e-10\\
16& 1.9150e-09& 6.3947e-08& 4.3328e-08& 9.2827e-13\\
32& 3.0260e-11& 2.0170e-09& 1.3667e-09& ---\\
64& 4.7425e-13& 6.3175e-11& 4.2809e-11& ---\\
\hline
\end{tabular}
\end{table}

\begin{table}[htbp]\caption{$r=5$ \label{r=5} }
\centering
\begin{tabular}{|c||c|c|c|c|c|c|}
\hline N & $\|u-u_{\cal P}\|_0$ & $\|u-u_{\cal P}\|_1$
&$|u_I-u_{\cal P}|_1$ & $|u-u_p|_{L,0}$\\
\hline
2 &  4.8206e-04& 1.5546e-02& 8.5017e-04& 4.0891e-05    \\
4 &  1.5627e-05& 9.6503e-04& 2.1627e-05& 5.2643e-07\\
8 &  2.9713e-07& 3.6434e-05& 3.8065e-07& 4.6413e-09\\
16&  4.8711e-09& 1.1927e-06& 6.1190e-09& 3.7318e-11\\
32&  7.7022e-11& 3.7707e-08& 9.6282e-11& 2.9365e-13\\
64&  1.2073e-12& 1.1817e-09& 1.5081e-12& ---\\
\hline
\hline
N & $|u-u_{\cal P}|_{aver,0}$ &$|u-u_{\cal P}|_{G,1}$ &
$|u-u_{\cal P}|_{aver,1}$  & $E_{node}$ \\ \hline
2 & 2.6075e-05& 2.2179e-04 & 1.4819e-04& 4.6819e-08   \\
4 & 3.2965e-07& 5.8493e-06 & 3.9162e-06& 3.0508e-11\\
8 & 2.8971e-09& 1.0085e-07 & 6.7553e-08& 2.6318e-14\\
16& 2.3277e-11& 1.6089e-09 & 1.0779e-09& ---\\
32& 1.8311e-13& 2.5266e-11 & 1.6928e-11& ---\\
64& ---       & 3.9473e-13 & 2.6482e-13& ---\\
\hline
\end{tabular}
\end{table}

To explicitly show the convergence rate of different approximate errors, we plot the error curves in Figures \ref{1_1} and \ref{1_2}.
We observe from Figure \ref{1_1} that the convergence rate  $|u-u_{\cal P}|_{1}$ is $r$ and the convergence rate of $\|u-u_{\cal P}\|_0$ is $r+1$. In other words, the FVM approximate solution converges to
 the exact solution with optimal convergence rates under both
for $H^1$ and $L^2$ norms, as predicted in \eqref{optimalh1} and \eqref{optimall2}.
We  also observe that the error $|u_I-u_{\cal P}|_1$ is of order $r+1$, which confirms the convergence result in \eqref{h1est}.
The errors  $|u-u_{\cal P}|_{aver,0}$, $|u-u_{\cal P}|_{L,0}$ and $E_{node}$  are presented in Figure \ref{1_2}. It is observed that
 $|u-u_{\cal P}|_{aver,0}$ and $|u-u_{\cal P}|_{L,0}$ converge with a degree $r+2$ which confirm the superconvergence property at Labatto points given in Theorem \ref{supconv_gauss}.
Since $E_{node}$ converges with a rate $2r$, it confirms our theory in Theorem \ref{supconv_vectices}.

\begin{figure}[htbp]
\hskip-0.5cm
\scalebox{0.5}{\includegraphics{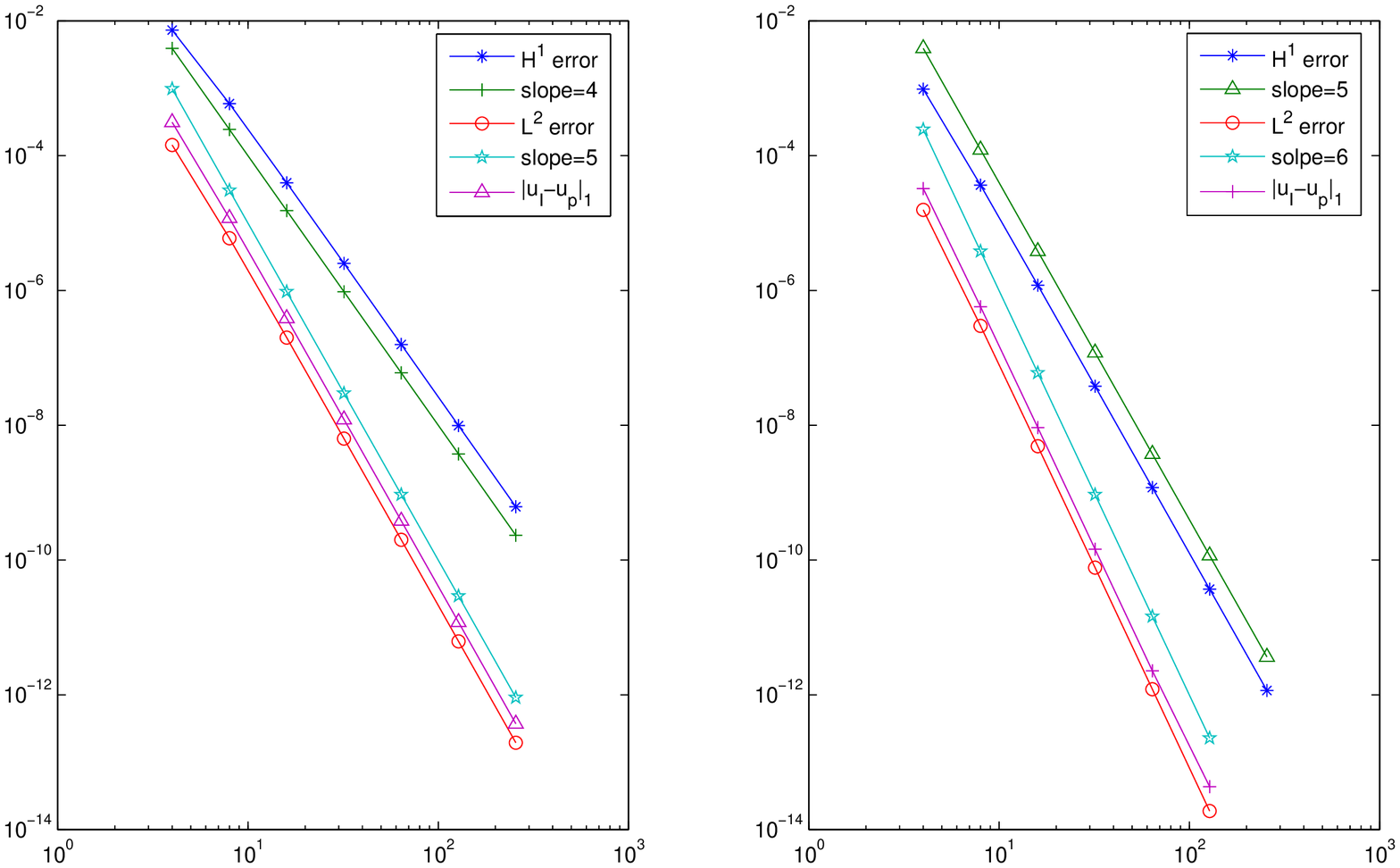}}
 \caption{left: $r=4$, right: $r=5$}\label{1_1}
\end{figure}

\begin{figure}[htbp]
%\begin{center}
\scalebox{0.5}{\includegraphics{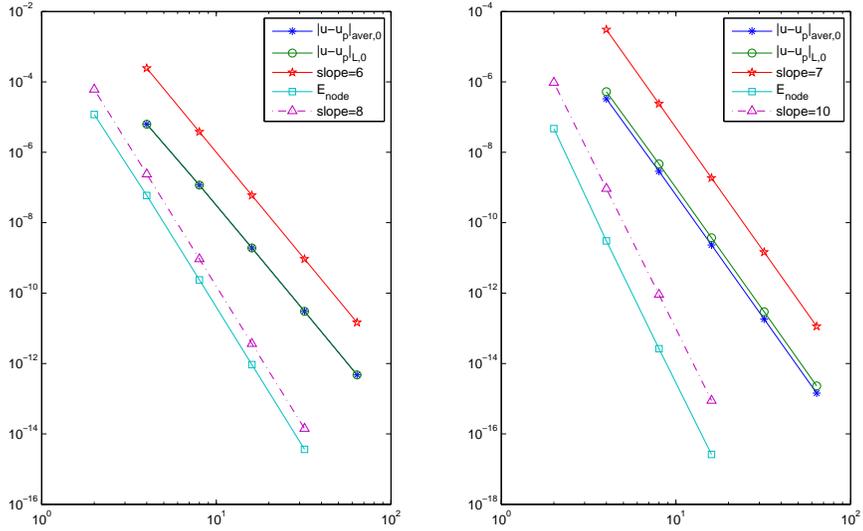}}
 \caption{left: $r=4$, right: $r=5$}\label{1_2}
%\end{center}
\end{figure}

\bigskip

{\it Example} 2.  In this example, we test the convergence behavior of derivative error
                  at Gauss points. We consider three  cases of Equation \eqref{Poisson}, they are
\begin{itemize}
%    \item Case 1: $\alpha(x)=e^x, \beta(x)=x, \gamma(x)=1,\forall x\in \O$,
%    \item Case 2: $\alpha(x)=e^x, \beta(x)=0, \gamma(x)=1,\forall x\in \O$,
%    \item Case 3: $\alpha(x)=e^x, \beta(x)=0, \gamma(x)=0,\forall x\in \O$.
    \item[Case 1]: $\alpha(x)=e^x,\  \beta(x)=\cos x,\  \gamma(x)=x$;
    \item[Case 2]: $\alpha(x)=e^x,\  \beta(x)=0,\  \gamma(x)=x$;
    \item[Case 3]: $\alpha(x)=e^x,\  \beta(x)=0,\  \gamma(x)=0$.
\end{itemize}
  The exact solution is always $u(x)=\sin x(x^{12}-x^{11})$ and the right-hand function $f$  change according to the coefficients in different cases.

 Listed in Table \ref{case=0} are errors in the derivative approximation at Gauss points for three different cases for $r=4$ and $r=5$, respectively.
 Plotted in Fig. \ref{1_3} are corresponding error curves. We observe that the convergence rate is $r+1$ for Case 1, $r+2$ for Case 2 and $2r$ for Case 3.
 These numerical results are consistent with our theories derived in Section 4.

\begin{table}
\caption{Gauss points.} \centering
\begin{threeparttable}
        \begin{tabular}{|c|c|c|c||c|c|c|}
        \hline
        &\multicolumn{3}{|c||}{$r=4$}&\multicolumn{3}{|c|}{$r=5$}\\
        \cline{1-7}N & Case 1 & Case 2 & Case 3  & Case 1  & Case 2 & Case 3  \\
        \hline
        1 &   4.7633e-03& 4.1098e-03 &4.1493e-03&1.5667e-03&1.1105e-04 &1.0183e-04 \\
        2 &   5.4962e-04& 3.1457e-05& 2.9493e-05&2.6075e-05& 1.9562e-06& 8.6701e-09\\
        4 &   3.5877e-05& 4.2964e-07& 1.1316e-07&3.9162e-06& 2.8751e-08& 3.1732e-11\\
        8 &   1.3338e-06& 8.4296e-09& 4.3677e-10&6.7553e-08& 2.6812e-10& 3.6386e-14\\
        16 &  4.3328e-08& 1.4052e-10& 1.7002e-12&1.0779e-09& 2.1878e-12& ---\\
        32 &  1.3667e-09& 2.2319e-12& 6.6291e-15&1.6928e-11& 1.7284e-14&---\\
       % 64 &  4.2809e-11& 3.4981e-14& ---&2.6482e-13&---&---\\
       \hline
       \end{tabular}
 \end{threeparttable}
 \label{case=0}
\end{table}

\begin{figure}[htbp]
\begin{center}
\hskip-0.5cm
\scalebox{0.5}{\includegraphics{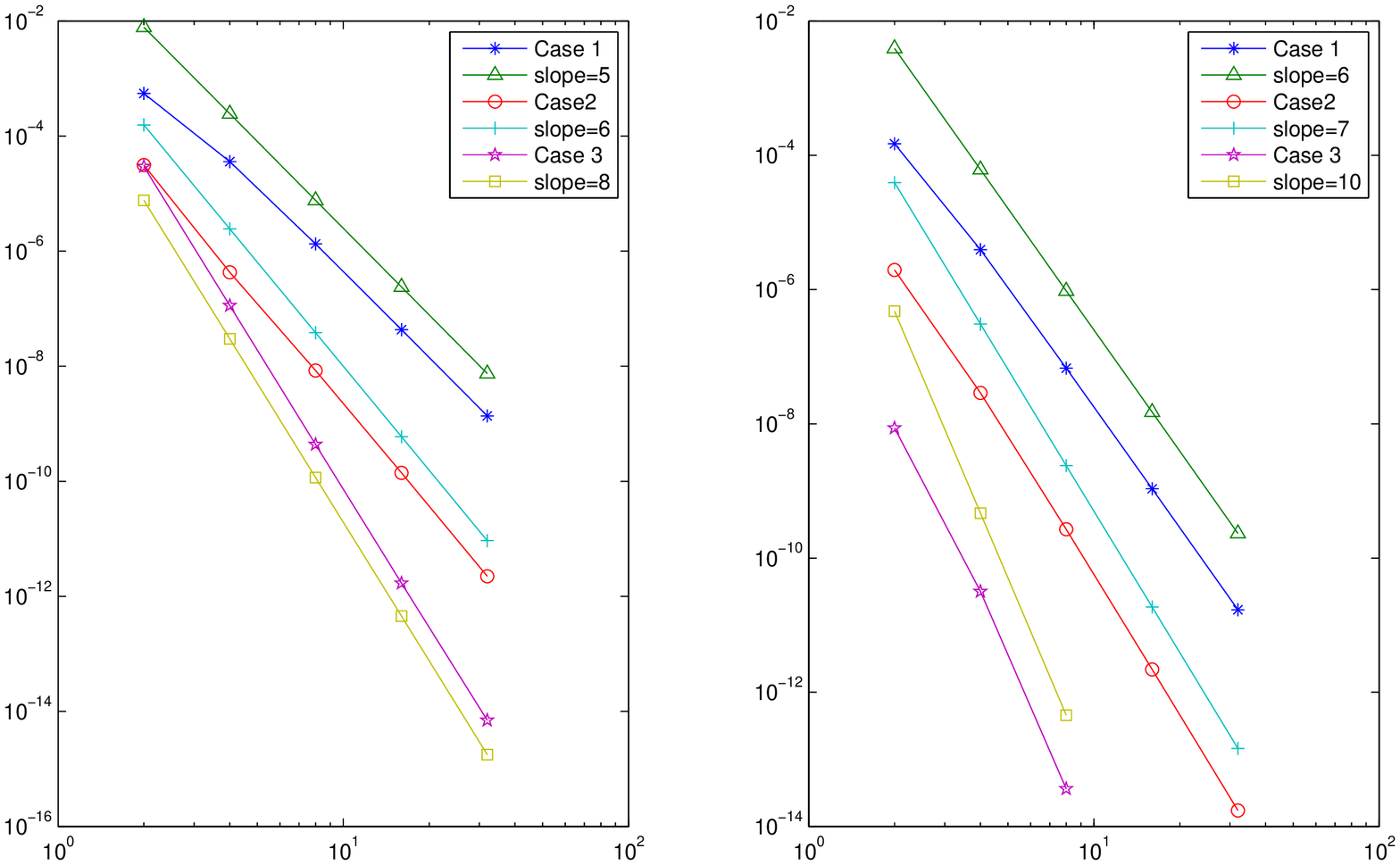}}
 \caption{left: $r=4$, right: $r=5$}\label{1_3}
\end{center}
\end{figure}

\section{Concluding remarks}
The mathematical theory for the FVM  has not been fully developed. The analysis in the literature
for high-order FVM schemes are often done case by case. It is a challenging task to develop mathematical theory for FVM scheme of an arbitrary order. In this article, we provide a
unified proof for the inf-sup condition of a family any order FVM schemes  in one dimensional setting.
Based on this, we show that the FVM solution converges to the exact solution
with optimal order, both in $H^1$ and $L^2$ norm.

We also studied the superconvergence of our FVM schemes. It is shown both theoretically and
numerically that at the nodal and interior Lobatto points, the superconvergence behavior of  FVM
is similar to that of the counterpart finite element method. Moreover, in some special cases, the superconvergence property of the derivative of the FVM solution
at the Gauss points maybe much better than that of the counterpart finite element method. For instances, when $\beta=0$, the  convergence rate of the derivative of the FVM solution is $h^{r+2}$ which is one order higher than the counterpart finite element method's $h^{r+1}$; when $\beta=\gamma=0$, the order is $h^{2r}$ which doubles the global optimal rate $h^r$, and it is much better than the counterpart finite element method's $h^{r+1}$ rate.
In a recent study \cite{ZhangZhangZou2011},  it is shown that after  a simple post-processing procedure, the  FEM solutions can have local conservation property. In this sense, the superconvergence property discovered
 in this paper become a powerful argument to support that the FVM still has its advantages.

\bigskip
\bigskip

\begin{thebibliography}{10}

\bibitem{adams} R. A. Adams and J. F. Fournier.
\newblock {\em Sobolev spaces},
\newblock Singapore press, 1981.

\bibitem{BabuskaAziz1972}
I. Babu\v ska and A.K. Aziz.
\newblock {\em Survey lectures on the mathematical foundations of the finite element method}.
\newblock Techinical Note BN-748, University of Maryland, College Park, Washington DC,
  1972.

\bibitem{Babuvska.I;Strouboulis.T2001}
I. Babu\v{s}ka and T. Strouboulis.
\newblock {\em The Finite Element Method and Its Reliability}.
\newblock %Numerical Mathematics and Scientific Computation,
 Oxford Science Publications, 2001.

\bibitem{Bank.R;Rose.D1987}
R. E. Bank and D.J. Rose.
\newblock Some error estimates for the box scheme.
\newblock {\em SIAM J. Numer. Anal.}, 24:777--787, 1987.

\bibitem{Barth.T;Ohlberger2004}
T.~Barth and M.~Ohlberger.
\newblock {\em Finite volume methods: foundation and analysis}.
\newblock Encyclopedia of computational Mechanics, volume~1,
  chapter~15. John Wiley \& Sons, 2004.

\bibitem{Cai.Z1991}
Z. Cai.
\newblock On the finite volume element method.
\newblock {\em Numer. Math.}, 58:713--735, 1991.

\bibitem{Cai.Z_Park.M2003}
Z. Cai, J. Douglas, and M. Park.
\newblock Development and analysis of higher order finite volume methods over
  rectangles for elliptic equations.
\newblock {\em Adv. Comput. Math.}, 19:3--33, 2003.

\bibitem{Celiker} F. Celiker and B. Cockburn.
\newblock Superconvergence of the numerical traces of discontinuous Galerkin
and hybridized methods for convection-diffusion problems in one space dimension.
\newblock {\em Math. Comp.}, 76:67--96, 2007.

\bibitem{Chen.C.M2001}
C. Chen.
\newblock {\em Structure Theorey of Superconvergence of Finite Elements} (in Chinese).
\newblock Hunan Science and Technology Press, Hunan, China, 2001.

%\bibitem{Chen.C;Huang.Y1995}
%C. Chen and Y. Huang.
%\newblock {\em High Accuracy Theory of Finite Element Methods} (in Chinese).
%\newblock Hunan Science Press, Hunan, China, 1995.

\bibitem{Chen.L} L. Chen.
\newblock A new class of high order finite volume methods for second order elliptic equations.
\newblock {\em SIAM J. Numer. Anal.}, 47:4021--4043, 2010.
%
%\bibitem{Chen1992}
%Z. Chen.
%\newblock The error estimate of generalized difference methods of 3rd-order
%  Hermite type for elliptic partial differencial equations.
%\newblock {\em Northeast. Math.}, 8:127--135, 1992.

%\bibitem{Chen_Sup1994}
%Z. Chen.
%\newblock Superconvergence of generalized difference methods for elliptic boudary value problems.
%\newblock {\em Numer. Math. J. Chinese Univ.}, 3:163-171,1994.


\bibitem{ChenWuXu2011} Z. Chen, J. Wu, and Y. Xu,
\newblock Higher-order finite volume methods for elliptic boundary value problems.
\newblock {\em Adv. in Comput. Math.}, in Press.

\bibitem{chou-kl} S.-H. Chou, D.Y. Kwak, and Q. Li,
$L^p$ error estimates and superconvergence for covolume or finite volume element methods.
{\it Numer. Meth. PDE Eq.} 19: 463-486, 2003.

\bibitem{chou-y} S.-H. Chou ans X. Ye,
Superconvergence of finite volume methods for the second order elliptic problem.
{\it Comput. Meth. Appl. Mech. Engrg.} 196:3706-3712, 2007.

\bibitem{DavisRabinowitz1984} P. J. Davis and P. Rabinowitz,
\newblock {\em Methods of Numerical Integration}.
\newblock 2nd Ed., Academic Press, Boston, 1984.

\bibitem{douglas-dupont} J. Douglas, Jr. and T. Dupont.
\newblock Galerkin approximations for the two point boundary problem using continuous,
piecewise polynomial spaces.
\newblock {\em Numer. Math.}, 22:99--109, 1974.

\bibitem{Emonot1992}
Ph. Emonot. {\em Methods de volums elements finis : applications aux equations de
  navier-stokes et resultats de convergence}.
\newblock  Lyon, 1992.

\bibitem{Ewing.R;Lin.T;Lin.Y2002}
R. Ewing, T. Lin, and Y. Lin.
\newblock On the accuracy of the finite volume element based on piecewise
  linear polynomials.
\newblock {\em SIAM J. Numer. Anal.}, 39:1865--1888, 2002.


\bibitem{EymardGallouetHerbin2000}
R. Eymard, T. Gallouet, and R. Herbin.
\newblock {\em Finite Volume Methods}, In : {\em Handbook of Numerical Analysis}, VII,  713-1020,  P.G. Ciarlet and J.L. Lions Eds.,
\newblock North-Holland, Amsterdam, 2000.


%
%\bibitem{Gao.G_Wang.T2010}
%G. Gao and T. Wang.
%\newblock Cubic superconvergence finite volume element methods for
%  one-dimensional elliptic and parabolic equations.
%\newblock {\em Comput. Appl. Math.}, 233:2285--2301, 2010.

%\bibitem{Hackbusch.W1989a}
%W. Hackbusch.
%\newblock On first and second order box methods.
%\newblock {\em Computing}, 41:277--296, 1989.
%
%\bibitem{HymanKnappScovel1992}J. M. Hyman, R. Knapp and J.C. Scovel, High order finite
%volume approximations of differential operators on nonuniform
%grids, {\it Physica D} {\bf 60}(1992), 112-138.
%
%\bibitem{M.Krizek1997}
%M. K\v{r}\'{i}\v{z}ek, P. Neittaanm\"{a}hi, and R. Stenberg.
%\newblock {\em Finite Element Method: Superconvergence, Post-processing and A
%  Posterior Estimates}, volume 196 of {\em Lecture Notes in Pure and Applied
%  Mathematics}.
%\newblock Marcel Dekker, New Youk, 1997.

%\bibitem{Li1982} R. Li, Generalized difference methods  for two
% points boundary problem, {\it Acta Scietiarum Naturalium Universitatis
%Jilineness}, {\bf 1}  (1982),  26-40 (in Chinese).

\bibitem{Li.R2000} R. Li, Z. Chen, and W. Wu.
\newblock {\em The Generalized Difference Methods for Partial differential Equations}.
\newblock Marcel Dikker, New Youk, 2000.
%
%\bibitem{Li.Y;Shu.S;Xu.Y;Zou.Q2011} Y. Li, S. Shu, Y. Xu, and Q. Zou.
%\newblock Multilevel preconditioning for the finite volume method.
%\newblock {\em Math. Comp.}, in press.

\bibitem{Liebau1996} F. Liebau.
\newblock The finite volume element method with quadratic basis function.
\newblock {\em Computing}, 57:281--299, 1996.

%\bibitem{LinYan} Q. Lin and N. Yan.
%\newblock {\em High Accuracy Finite Element Methods} (in Chinese).
%Hebei University Press, 2006.


\bibitem{Ollivier-Gooch;M.Altena2002}
C. Ollivier-Gooch and M. Altena.
\newblock A high-order-accurate unconstructed mesh finite-volume scheme for the
  advection-diffusion equation.
\newblock {\em J. Comput. Phys.}, 181:729--752, 2002.

%\bibitem{Patanker1980} S. V. Patanker, {\it Numerical Heat Transfer and
%Fluid Flow}, Ser. Comput. Methods Mech. Thermal Sci., McGraw Hill,
%New York, 1980.

\bibitem{Plexousakis_2004}
M. Plexousakis and G. Zouraris.
\newblock On the construction and analysis of high order locally conservative
  finite volume type methods for one dimensional elliptic problems.
\newblock {\em SIAM J. Numer. Anal.}, 42:1226--1260, 2004.

%\bibitem{russell-wy} T.F. Russell, M.F. Wheeler, and I. Yotov,
%Superconvergence for control-volume mixed finite element methods on rectangular grids,
%{\it SIAM J. Numer. Anal.} 45-1 (2007), 223-235.

%\bibitem{C.Shu2003}
%C.-W. Shu.
%\newblock High order finite difference and finite volume weno schemes and
%  discontinous galerkin methods for cfd.
%\newblock {\em J. Comput. Fluid Dyn.}, 17:107--118, 2003.

\bibitem{Suli1991}
E. S\"uli.
\newblock Convergence of finite volume schemes for Poisson's equation on nonuniform meshes.
{\em SIAM J. Numer. Anal}. 28:1419-1430, 1991.

\bibitem{Suli1992}
E. S\"uli.
\newblock The Accuracy of Cell Vertex Finite Volume Methods on Quadrilateral Meshes.
\newblock {\em Math.  Comp.}, 59:359--382, 1992.


%\bibitem{Sun} Y. Sun, Z. Wang, and Y. Liu, Spectral finite volume method for conservation
%laws on unstructrured grids VI: Extension to viscous flow, {\it J. Comp. Phys.},
%{\bf 215} (2006), 41-58.

%\bibitem{Tian.Chen_1991}
%M. Tian and Z. Chen.
%\newblock Quadratical element generalized differential methods for elliptic equations.
%\newblock {\em Numer. Math. J. Chinese Univ.}, 13:99--113, 1991.

\bibitem{Tian.Chen_1991}
M. Tian and Z. Chen.
\newblock Quadratical element generalized differential methods for elliptic equations.
\newblock {\em Numer. Math. J. Chinese Univ.}, 13:99--113, 1991.


\bibitem{L.R.Wahlbin1995}
L.R. Wahlbin.
\newblock {\em Superconvergence In Galerkin Finite Element Methods}.
Lecture Notes in Mathematics, Volume 1605.
\newblock Spring, Berlin, 1995.

%\bibitem{Wang.T.Gu.Y2010}
%T. Wang and Y. Gu.
%\newblock Superconvergence biquadratic finite volume element method for two
%  dimensional poisson's equation.
%\newblock {\em Compu. Appl. Math.}, 234:447--460, 2010.


%\bibitem{Wu2003} H. Wu and R. Li.
%\newblock Error estimates for finite volume element methods for general second
%  order elliptic problem.
%\newblock {\em Numer. Meth. PDEs.}, 693--708, 2003.


\bibitem{XuZikatanov2003}
J. Xu and L. Zikatanov.
\newblock Some observations on Babuska-Brezzi conditions.
\newblock {\em Numer. Math.}, 94:195--202, 2003.

\bibitem{Xu.J.Zou.Q2009}
J. Xu and Q. Zou.
\newblock Analysis of linear and quadratic simplitical finite volume methods
  for elliptic equations.
\newblock {\em Numer. Math.}, 111:469--492, 2009.

\bibitem{ZhangZhangZou2011}
S. Zhang, Z. Zhang, and Q. Zou.
\newblock Flux conserving finite element method.
\newblock {\em Preprint}, 2011.



%\bibitem{Li.Yonghai2009}
%C. Yu and Y. Li.
%\newblock Quadratic finite volume element method based on optimal stress
%  poinits for solving two-point boundary value problems.
%\newblock {\em J. of Jilin Univ.}, 47:639--648, 2009.
%
%\bibitem{Li.Yonghai2010}
%C. Yu and Y. Li.
%\newblock Biquadratic element finite volume method based on optimal stress
%  poinits for solving Poisson equations.
%\newblock {\em Math. Numer.  }(in Chinese), 32:59--74, 2010.
%
%\bibitem{Li.Yonghai_Wangxiao2010}
%C. Yu, X. Wang, and Y. Li.
%\newblock A class of modified cubic finite volume element method for solving two-point boundary value problems.
%\newblock {\em Math. Numer. }(in Chinese), 32:385--398, 2010.


\bibitem{Zhang1} Z. Zhang.
\newblock Finite element superconvergent approximation for one-dimensional singularly perturbed problems.
\newblock {\em Numer. Meth. Part. D. E.} 18:374--395, 2002.

\bibitem{Zhang2} Z. Zhang.
\newblock Superconvergence of spectral collocation and p-version methods in one dimensional problems.
\newblock {\em Math. Comp.} 74:1621--1636, 2005.


\bibitem{Zhu.QD;LinQ1989}
 Q. Zhu and Q. Lin.
\newblock {\em Superconvergence Theory of the Finite Element Method} (in Chinese).
\newblock Hunan Science Press, Hunan, 1989.

\bibitem{Zienkiewicz;zhujianzhong2005}
O. Zienkiewicz, R. Taylor, and J.Z. Zhu.
\newblock {\em The Finite Element Method}.
\newblock 6th ed, McGraw-Hill, London, 2005.

\bibitem{zou2009a} Q. Zou.
\newblock Hierarchical error estimates for finite volume approximation solution of elliptic equations.
\newblock {\em Appl. Numer. Math.}, 60:142--153, 2010.


\end{thebibliography}
\end{document}